\newtheorem{theorem}{Theorem}[section]
\newtheorem{lemma}[theorem]{Lemma}
\newtheorem{proposition}[theorem]{Proposition}
\theoremstyle{definition}
\newtheorem{definition}[theorem]{Definition}
\newtheorem{remark}[theorem]{Remark}
\definecolor{A}{rgb}{.75,1,.75}
\numberwithin{equation}{section}
\begin{document}

\title[Fusion procedure]{Fusion procedure for cyclotomic BMW algebras}
\author[Weideng Cui]{Weideng Cui}
\address{School of Mathematics, Shandong University, Jinan, Shandong 250100, P.R. China.}
\email{cwdeng@amss.ac.cn}

\begin{abstract}
Inspired by the work [IMOg2], in this note, we prove that the pairwise orthogonal primitive idempotents of generic cyclotomic Birman-Murakami-Wenzl algebras can be constructed by consecutive evaluations of a certain rational function. In the appendix, we prove a similar result for generic cyclotomic Nazarov-Wenzl algebras.
\end{abstract}



\maketitle
\medskip
\section{Introduction}
\subsection{}
The primitive idempotents of a symmetric group $\mathfrak{S}_n,$ showed by Jucys [Juc], can be obtained by taking a certain limiting process on a rational function. The process is now commonly known as the fusion procedure, which has been further developed in the situation of Hecke algebras [Ch]; see also [Na2-4]. In [Mo], Molev has presented another approach of the fusion procedure for $\mathfrak{S}_n,$ which depends on the existence of a maximal commutative subalgebra generated by the Jucys-Murphy elements. In his approach, the primitive idempotents are obtained by consecutive evaluations of a certain rational function. The new version of the fusion procedure has been generalized to the Hecke algebras of type $A$ [IMO], to the Brauer algebras [IM, IMOg1], to the Birman-Murakami-Wenzl algebras [IMOg2], to the complex reflection groups of type $G(d,1,n)$ [OgPA1], to the Ariki-Koike algebras [OgPA2], to the wreath products of finite groups by the symmetric group [PA], to the degenerate cyclotomic Hecke algebras [ZL], to the Yokonuma-Hecke algebras [C1], to the cyclotomic Yokonuma-Hecke algebras [C2, Appendix] and to the degenerate cyclotomic Yokonuma-Hecke algebras [C3].

\subsection{}
The Birman-Murakami-Wenzl (for brevity, BMW) algebra was algebraically defined by Birman and Wenzl [BW], and independently Murakami [Mu], which is an algebra generated by some elements satisfying certain particular relations. These relations are in fact implicitly modeled on the ones of certain algebra of tangles studied by Kauffman [Ka] and Morton and Traczyk [MT], which is known as a Kauffman tangle algebra. BMW algebra are closely related to Artin braid groups of type $A,$ Iwahori-Hecke algebras of type $A,$ quantum groups, Brauer algebras and other diagram algebras; see [Eny1-2, HuXi, Hu, LeRa, MW, RuSi4-6, RuSo, Xi] and the references therein.

Motivated by studying link invariants, H\"{a}ring-Oldenburg [HO] introduced a class of finite dimensional associative algebras called cyclotomic Birman-Murakami-Wenzl (for brevity, BMW) algebras, generalizing the notions of BMW algebras. Such algebras are closely related to Artin braid groups of type $B,$ cyclotomic Hecke algebras and other research objects, and have been studied by a lot of authors from different perspectives; see [Go1-4, GoHM1-2, HO, OrRa, RuSi2-3, RuXu, Si, WiYu1-3, Xu, Yu] and so on.

\subsection{}
Inspired by the work [IMOg2] on the fusion procedures of BMW algebras, in this note we prove that a complete set of pairwise orthogonal primitive idempotents of cyclotomic BMW algebras can be derived by consecutive evaluations of a certain rational function in several variables. In the appendix, we prove a similar result for generic cyclotomic Nazarov-Wenzl algebras.

This paper is organized as follows. In Section 2, we recall some preliminaries and introduce the the primitive idempotents $E_{\mathcal{T}}$ of cyclotomic BMW algebras. In Section 3, we establish the fusion formula for the primitive idempotent $E_{\mathcal{T}}.$ In Section 4 (Appendix), we develop the fusion formulas for the primitive idempotents of cyclotomic Nazarov-Wenzl algebras.

\section{Preliminaries}
\subsection{Cyclotomic Birman-Murakami-Wenzl algebras}
\begin{definition}
Assume that $\mathbb{K}$ is an algebraically closed field containing $\delta_{j},$ $0\leq j\leq d-1,$ and some nonzero elements $\rho,$ $q$, $q-q^{-1}$ and $v_i,$ $1\leq i\leq d$, and that they satisfy the relation $\rho-\rho^{-1}=(q-q^{-1})(\delta_{0}-1).$\vskip2mm

Fix $n\geq 1.$ The cyclotomic Birman-Murakami-Wenzl algebra $\mathscr{B}_{d, n}$ is the $\mathbb{K}$-algebra generated by the elements $X_{1}^{\pm 1}, T_{i}^{\pm 1}$ and $E_{i}$ ($1\leq i\leq n-1$) with the following relations:\vskip2mm

(1) (Inverses) $T_{i}T_{i}^{-1}=T_{i}^{-1}T_{i}=1$ and $X_{1}X_{1}^{-1}=X_{1}^{-1}X_{1}=1.$

(2) (Idempotent relations) $E_{i}^{2}=\delta_{0} E_{i}$ for $1\leq i\leq n-1.$

(3) (Affine braid relations)

\hspace{0.7cm}(a) $T_{i}T_{i+1}T_{i}=T_{i+1}T_{i}T_{i+1}$ and $T_{i}T_{j}=T_{j}T_{i}$ if $|i-j|\geq 2.$

\hspace{0.7cm}(b) $X_{1}T_{1}X_{1}T_{1}=T_{1}X_{1}T_{1}X_{1}$ and $X_{1}T_{j}=T_{j}X_{1}$ if $j\geq 2.$

(4) (Tangle relations)

\hspace{0.7cm}(a) $E_{i}E_{i\pm 1}E_{i}=E_{i}.$

\hspace{0.7cm}(b) $T_{i}T_{i\pm 1}E_{i}=E_{i\pm 1}E_{i}$ and $E_{i}T_{i\pm 1}T_{i}=E_{i}E_{i\pm 1}.$

\hspace{0.7cm}(c) For $1\leq j\leq d-1,$ $E_{1}X_{1}^{j}E_{1}=\delta_{j}E_{1}.$

(5) (Kauffman skein relations) $T_{i}-T_{i}^{-1}=(q-q^{-1})(1-E_{i})$ for $1\leq i\leq n-1.$

(6) (Untwisting relations) $T_{i}E_{i}=E_{i}T_{i}=\rho^{-1}E_{i}$ for $1\leq i\leq n-1.$

(7) (Unwrapping relations) $E_{1}X_{1}T_{1}X_{1}=\rho E_{1}=X_{1}T_{1}X_{1}E_{1}.$

(8) (Cyclotomic relation) $(X_1-v_1)(X_1-v_2)\cdots (X_1-v_d)=0.$
\end{definition}

In $\mathscr{B}_{d, n}$, We define inductively the following elements:
\begin{equation}\label{JMur-elements}
X_{i+1} :=T_{i}X_iT_i\quad\mbox{for}~i=1,\ldots,n-1.
\end{equation}
It can be easily checked that the elements $X_1,\ldots,X_n$ commute with each other, and moreover, we have
\begin{equation}\label{JMur-elements1}
E_{i}X_{i}X_{i+1}=X_{i}X_{i+1}E_{i}=E_{i}\quad\mbox{for}~i=1,\ldots,n-1.
\end{equation}

We now define the following elements (see [IMOg2, (2.15)]):
\begin{equation}\label{Baxterized-elements11}
T_{i}(u,v)=T_{i}+\frac{(q-q^{-1})u}{v-u}+\frac{(q-q^{-1})u}{u+\rho qv}E_{i}\quad\mbox{for}~i=1,\ldots,n-1.
\end{equation}
Note that $E_{i}^{2}=\delta_{0}E_{i}$, where $\delta_{0}=\frac{(q^{-1}+\rho^{-1})(\rho q-1)}{q-q^{-1}}.$ By using this, it can be easily checked that (see [IMOg2, (2.17-18)])
\begin{equation}\label{Baxterized-elements111}
T_{i}(u,v)T_{i}(v,u)=f(u,v)\quad\mbox{for}~i=1,\ldots,n-1,
\end{equation}
where
\begin{equation}\label{Baxterized-elements1111}
f(u,v)=f(v,u)=\frac{(u-q^{2}v)(u-q^{-2}v)}{(u-v)^{2}}.
\end{equation}

\subsection{Combinatorics}
$\lambda=(\lambda_{1},\ldots,\lambda_{k})$ is called a partition of $n$ if it is a finite sequence of weakly decreasing nonnegative integers whose sum is $n.$ We set $|\lambda| :=n.$ We shall identify a partition $\lambda$ with a Young diagram, which is the set $$[\lambda] :=\{(i,j)\:|\:i\geq 1~\mathrm{and}~1\leq j\leq \lambda_{i}\}.$$ We shall regard $\lambda$ as a left-justified array of boxes such that there exist $\lambda_{j}$ boxes in the $j$-th row for $j=1,\ldots,k.$ We write $\theta=(a,b)$ if the box $\theta$ lies in row $a$ and column $b.$

Similarly, a $d$-partition of $n$ is an ordered $d$-tuple $\bm{\lambda}=(\lambda^{(1)},\lambda^{(2)},\ldots,\lambda^{(d)})$ of partitions $\lambda^{(k)}$ such that $\sum_{k=1}^{d}|\lambda^{(k)}|=n.$ We denote by $\mathcal{P}_{d}(n)$ the set of $d$-partitions of $n.$ We shall identify a $d$-partition $\bm{\lambda}$ with its Young diagram, which is the ordered $d$-tuple of the Young diagrams of its components. We write $\bm{\theta}=(\theta, s)$ if the box $\theta$ lies in the component $\lambda^{(s)}.$

Assume that $\bm{\lambda}$ and $\bm{\mu}$ are two $d$-partitions. We say that $\bm{\lambda}$ is obtained from $\bm{\mu}$ by adding a box if there exists a pair $(j,t)$ such that $\lambda_{j}^{(t)}=\mu_{j}^{(t)}+1$ and $\lambda_{i}^{(s)}=\mu_{i}^{(s)}$ for $(i,s)\neq (j,t).$ In this case, we will also say that $\bm{\mu}$ is obtained from $\bm{\lambda}$ by removing a box.

Set \[\Lambda_{d,n}^{+} :=\{(l,\bm{\lambda})\:|\:0\leq l\leq \lfloor n/2\rfloor, \bm{\lambda}\in \mathcal{P}_{d}(n-2l)\}.\]

The combinatorial objects appearing in the representation theory of $\mathscr{B}_{d, n}$ will be updown tableaux. For $(f, \bm{\lambda})\in \Lambda_{d,n}^{+},$ an $n$-updown $\bm{\lambda}$-tableau, or more simply an updown $\bm{\lambda}$-tableau, is a sequence $\mathcal{T}=(\mathcal{T}_{1},\mathcal{T}_{2},\ldots,\mathcal{T}_{n})$ of $d$-partitions such that $\mathcal{T}_{n}=\bm{\lambda}$ and $\mathcal{T}_{i}$ is obtained from $\mathcal{T}_{i-1}$ by either adding or removing a box, for $i=1,\ldots,n$, where we set $\mathcal{T}_{0}=\emptyset.$ Let $\mathscr{T}_{n}^{ud}(\bm{\lambda})$ be the set of updown $\bm{\lambda}$-tableaux of $n.$

Suppose that $(f, \bm{\lambda})\in \Lambda_{d,n}^{+}$ and $\mathcal{U}=(\mathcal{U}_{1},\ldots,\mathcal{U}_{n})\in \mathscr{T}_{n}^{ud}(\bm{\lambda}).$ Let
\begin{align}\label{symme-forms}
\mathrm{c}(\mathcal{U}|k)=
\begin{cases}
v_{s}q^{2(j-i)} & \text{if } \mathcal{U}_{k}=\mathcal{U}_{k-1}\cup ((i,j),s),
\\
v_{s}^{-1}q^{2(i-j)} & \text{if } \mathcal{U}_{k-1}=\mathcal{U}_{k}\cup ((i,j),s).
\end{cases}
\end{align}
Given a box $\bm{\alpha}=((i,j),s),$ we define the content of it by
\begin{align}\label{symme-forms11113344}
\mathrm{c}(\mathcal{U}|\bm{\alpha})=
\begin{cases}
v_{s}q^{2(j-i)} & \text{if }\bm{\alpha}\text{ is an addable box of }\mathcal{U},
\\
v_{s}^{-1}q^{2(i-j)} & \text{if }\bm{\alpha}\text{ is a removable box of }\mathcal{U}.
\end{cases}
\end{align}

We give the generalizations of some constructions in [IM, Section 3]. Suppose that $(f, \bm{\lambda})\in \Lambda_{d,n}^{+}$ and $\mathcal{T}=(\mathcal{T}_{1},\ldots,\mathcal{T}_{n})\in \mathscr{T}_{n}^{ud}(\bm{\lambda}).$ Set $\bm{\mu}=\mathcal{T}_{n-1}$ and consider the updown $\bm{\mu}$-tableau $\mathcal{U}=(\mathcal{T}_{1},\ldots,\mathcal{T}_{n-1}).$ We now define two $d$-tuples of infinite matrices
\[M(\mathcal{U})=(m_{1}(\mathcal{U}),\ldots,m_{d}(\mathcal{U}))\quad \mathrm{ and }\quad \overline{M}(\mathcal{U})=(\overline{m}_{1}(\mathcal{U}),\ldots,\overline{m}_{d}(\mathcal{U})),\]
here the rows and columns of each $m_{s}(\mathcal{U})$ or $\overline{m}_{s}(\mathcal{U})$ are labelled by positive integers and only a finite number of entries in each of the matrices are nonzero. The entry $m_{ij}^{s}$ of the matrix $m_{s}(\mathcal{U})$ (respectively, the entry $\overline{m}_{ij}^{s}$ of the matrix $\overline{m}_{s}(\mathcal{U})$) equals the number of times that the box $((i,j),s)$ is added (respectively, removed) in the sequence $(\emptyset, \mathcal{T}_{1},\ldots, \mathcal{T}_{n-1}).$

For each $k\in \mathbb{Z}$ and $1\leq s\leq d,$ we define two nonnegative integers $d_{k}^{s}=d_{k}(m_{s}(\mathcal{U}))$ and $\overline{d}_{k}^{s}=d_{k}(\overline{m}_{s}(\mathcal{U}))$ as the sums of the entries of the matrices $m_{s}(\mathcal{U})$ and $\overline{m}_{s}(\mathcal{U})$ on the $k$-th diagonal, that is,
\begin{equation}\label{dsk-dskbar}
d_{k}^{s}=\sum_{j-i=k}m_{ij}^{s}\quad\mathrm{ and }\quad \overline{d}_{k}^{s}=\sum_{j-i=k}\overline{m}_{ij}^{s}.
\end{equation}

Furthermore, we define the indexes $g_{k}^{s}=g_{k}(m_{s}(\mathcal{U}))$ and $\overline{g}_{k}^{s}=g_{k}(\overline{m}_{s}(\mathcal{U}))$ as follows:
\begin{equation}\label{index-indexbar}
g_{k}^{s}=\delta_{k0}+d_{k-1}^{s}+d_{k+1}^{s}-2d_{k}^{s}\quad\mathrm{ and }\quad \overline{g}_{k}^{s}=\overline{d}_{k-1}^{s}+\overline{d}_{k+1}^{s}-2\overline{d}_{k}^{s}.
\end{equation}

Finally, we define some integer $p_{1},\ldots,p_{n}$ associated to $\mathcal{T}$ inductively such that $p_{k}$ depends only on the first $k$ $d$-partitions $(\mathcal{T}_{1},\ldots,\mathcal{T}_{k})$ of $\mathcal{T}.$ Therefore, it is enough to define $p_{n}.$ We set
\begin{equation}\label{integer-indexbar}
p_{n}=1-g_{k_{n}}(m_{s_{n}}(\mathcal{U}))
\end{equation}
if $\mathcal{T}_{n}$ is obtained from $\mathcal{T}_{n-1}$ by adding a box $((i_n,j_n),s_n)$, where $k_n=j_n-i_n;$
\begin{equation}\label{integer-indexbar11}
p_{n}=1-g_{k_{n}'}(\overline{m}_{s_{n}'}(\mathcal{U}))
\end{equation}
if $\mathcal{T}_{n}$ is obtained from $\mathcal{T}_{n-1}$ by removing a box $((i_{n}',j_{n}'),s_{n}')$, where $k_{n}'=j_{n}'-i_{n}'.$

Assume that $(f, \bm{\lambda})\in \Lambda_{d,n}^{+},$ $\mathcal{T}=(\mathcal{T}_{1},\ldots,\mathcal{T}_{n})$ is an $n$-updown $\bm{\lambda}$-tableau and that
$\mathcal{U}=(\mathcal{T}_{1},\ldots,\mathcal{T}_{n-1}).$ We then define the element $f(\mathcal{T})$ inductively by
\begin{equation}\label{hooklength-indexbar11}
f(\mathcal{T})=f(\mathcal{U})\varphi(\mathcal{U}, \mathcal{T}),
\end{equation}
where
\begin{equation*}
\varphi(\mathcal{U}, \mathcal{T})=\prod_{\substack{k\neq k_{n}\\k\in \mathbb{Z}}}(q^{2k_{n}}-q^{2k})^{g_{k}^{s_{n}}}\prod_{\substack{1\leq t\leq d; t\neq s_{n}\\k\in \mathbb{Z}}}\hspace{-2mm}(v_{s_{n}}q^{2k_{n}}-v_{t}q^{2k})^{g_{k}^{t}}  \prod_{\substack{1\leq r\leq d\\k\in \mathbb{Z}}}(v_{s_{n}}q^{2k_{n}}-v_{r}^{-1}q^{-2k})^{\overline{g}_{k}^{r}}
\end{equation*}
if $\mathcal{T}_{n}$ is obtained from $\mathcal{T}_{n-1}$ by adding a box $((i_n,j_n),s_n)$, where $k_n=j_n-i_n;$
\begin{equation*}
\varphi(\mathcal{U}, \mathcal{T})=\prod_{\substack{k\neq k_{n}'\\k\in \mathbb{Z}}}(q^{-2k_{n}'}-q^{-2k})^{\overline{g}_{k}^{s_{n}'}}\hspace{-1.5mm}
\prod_{\substack{1\leq t\leq d; t\neq s_{n}'\\k\in \mathbb{Z}}}\hspace{-2mm}(v_{s_{n}'}^{-1}q^{-2k_{n}'}-v_{t}^{-1}q^{-2k})^{\overline{g}_{k}^{t}}  \prod_{\substack{1\leq r\leq d\\k\in \mathbb{Z}}}(v_{s_{n}'}^{-1}q^{-2k_{n}'}-v_{r}q^{2k})^{g_{k}^{r}}
\end{equation*}
if $\mathcal{T}_{n}$ is obtained from $\mathcal{T}_{n-1}$ by removing a box $((i_{n}',j_{n}'),s_{n}')$, where $k_{n}'=j_{n}'-i_{n}'.$

In the special situation when $f=0,$ that is, $\bm{\lambda}$ is a $d$-partition of $n,$ there is a natural bijection between the set of $n$-updown $\bm{\lambda}$-tableaux and the set of standard $\bm{\lambda}$-tableaux defined in [DJM, Definition (3.10)]. The following proposition is inspired by [IM, Proposition 3.3] and can be proved similarly.
\begin{proposition}\label{special-propo}
If $\bm{\lambda}$ is a $d$-partition of $n$ and $\mathcal{T}=(\mathcal{T}_{1},\ldots,\mathcal{T}_{n})$ is an $n$-updown $\bm{\lambda}$-tableau, then $p_1,\ldots,p_{n}$ are all equal to zero, and $f(\mathcal{T})$ is exactly equal to $\emph{F}_{\bm{\lambda}}^{-1}$ defined in $[\emph{OgPA}2, \emph{Section } 2.2(12)]$ when $d=m.$
\end{proposition}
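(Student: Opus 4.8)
The plan is to exploit the hypothesis $f=0$. When $\bm{\lambda}$ is a genuine $d$-partition of $n$, an $n$-updown $\bm{\lambda}$-tableau can only add boxes and never remove them: if $a$ and $r$ denote the numbers of additions and removals among the $n$ steps, then $a+r=n$ and $a-r=|\bm{\lambda}|=n$, forcing $r=0$. Hence, under the bijection recalled just before the statement, $\mathcal{T}$ is an ordinary standard $\bm{\lambda}$-tableau: each box of $\bm{\mu}=\mathcal{T}_{n-1}$ is added exactly once, the removal matrices $\overline{m}_{s}(\mathcal{U})$ all vanish, and $\overline{d}_{k}^{s}=\overline{g}_{k}^{s}=0$ for every $k$ and $s$. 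In particular $d_{k}^{s}$ is simply the number of cells on the $k$-th diagonal of the $s$-th component of $\bm{\mu}$.

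First I would establish the combinatorial lemma underlying $p_{n}=0$: for a single partition $\nu$ with $d_{k}$ the number of cells on its $k$-th diagonal, and for any addable box of $\nu$ on diagonal $k$, one has $\delta_{k0}+d_{k-1}+d_{k+1}-2d_{k}=1$. I would prove this by encoding $\nu$ through its $\beta$-set (Maya diagram) $p(\cdot)$, where $p(m)=1$ exactly when $m=\nu_{i}-i$ for some row $i$. The injection of diagonal $k+1$ into diagonal $k$ (sending $(i,i+k+1)$ to $(i,i+k)$), together with the fact that each value $\nu_{i}-i$ is attained at most once, yields $d_{k}-d_{k+1}=p(k)-[k<0]$; taking one further difference gives $\delta_{k0}+d_{k-1}+d_{k+1}-2d_{k}=p(k-1)-p(k)$, which equals $+1$, $-1$, or $0$ according as diagonal $k$ carries an addable box, a removable box, or neither. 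Applied to the component $s_{n}$ of $\bm{\mu}$ and the added box on diagonal $k_{n}$, this gives $g_{k_{n}}^{s_{n}}=1$, whence $p_{n}=1-g_{k_{n}}^{s_{n}}=0$. Since each truncation $(\mathcal{T}_{1},\ldots,\mathcal{T}_{k})$ is again a standard tableau and $p_{k}$ is computed by the same rule from it, the identical argument yields $p_{1}=\cdots=p_{n}=0$.

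For the evaluation of $f(\mathcal{T})$, the vanishing of all $\overline{g}_{k}^{s}$ kills the third product in $\varphi(\mathcal{U},\mathcal{T})$, leaving only the first two. The key manipulation is a discrete-Laplacian telescoping: using $g_{k}^{t}=\delta_{k0}+d_{k-1}^{t}+d_{k+1}^{t}-2d_{k}^{t}$ and shifting the index $k$ in the factors, for any $z$ one rewrites
\[
\prod_{k\in\Z}(z-v_{t}q^{2k})^{g_{k}^{t}}=(z-v_{t})\prod_{\square}\frac{(z-v_{t}q^{2(c_{\square}+1)})(z-v_{t}q^{2(c_{\square}-1)})}{(z-v_{t}q^{2c_{\square}})^{2}},
\]
the product running over the cells $\square$ of the $t$-th component of $\bm{\mu}$, with $c_{\square}$ the content $j-i$ of $\square$. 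Feeding in $z=v_{s_{n}}q^{2k_{n}}$ (and treating the same-component product, where the factor at $k=k_{n}$ is omitted precisely because $g_{k_{n}}^{s_{n}}=1$) expresses each step's $\varphi$ as a product of content-difference factors attached to the cells already present. The final step is to multiply these contributions over all $n$ steps and compare the resulting product with the explicit expression for $F_{\bm{\lambda}}$ in [OgPA2, Section 2.2(12)] under the identification $d=m$ of the number of parameters $v_{1},\ldots,v_{d}$.

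I anticipate that the main obstacle lies entirely in this last comparison. Establishing $p_{k}=0$ is elementary once the $\beta$-set identity is in hand, but matching $f(\mathcal{T})$ with $F_{\bm{\lambda}}^{-1}$ requires carefully tracking the omitted diagonal $k=k_{n}$, the $\delta_{k0}$-corrections (and hence the extra factors $z-v_{t}$), the powers of $v_{t}$, and the signs, and then recognizing that the telescoped product over cells reassembles exactly into the content-difference product defining $F_{\bm{\lambda}}$ in the reference. This is the precise point at which the argument parallels, and must adapt, the computation of [IM, Proposition 3.3].
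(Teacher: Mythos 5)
Your proposal is correct and follows essentially the same route as the paper, which gives no proof at all beyond the remark that the statement ``can be proved similarly'' to [IM, Proposition 3.3]; your $\beta$-set computation showing $g_{k_n}^{s_n}=1$ at the diagonal of an addable box (hence $p_k=0$) and the discrete-Laplacian telescoping of $\prod_{k}(z-v_tq^{2k})^{g_k^t}$ into content factors over the cells of $\bm{\mu}$ are exactly the two ingredients of that adaptation. The one step you leave unexecuted --- the final bookkeeping match against the explicit formula for $F_{\bm{\lambda}}$ in [OgPA2, Section 2.2(12)] --- is likewise left implicit in the paper, and your plan for carrying it out is the right one.
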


\subsection{Idempotents of $\mathscr{B}_{d, n}$}
Following [RuXu, Definition 3.4], we say that $\mathscr{B}_{d, n}$ is generic if the parameters $v_i$, $1\leq i\leq d$, and $q$ satisfy the conditions (1) the order $o(q^{2})$ of $q^{2}$ satisfies $o(q^{2})> 2n$; (2) $|r|\geq 2n$ whenever there exists $r\in \mathbb{Z}$ such that either $v_{i}v_{j}^{\pm 1}=q^{2r}$ for $i\neq j,$ or $v_{i}=\pm q^{r}.$ Following [WiYu1, Corollary 4.5], we say that $\mathscr{B}_{d, n}$ is admissible if the set $\{E_{1}, E_{1}X_{1},\ldots,E_{1}X_{1}^{d-1}\}$ is linearly independent in $\mathscr{B}_{d, 2}.$ It has been proved by Goodman [Go2, Theorem 4.4] that this admissible condition coincides with the $\bm{\mathrm{u}}$-admissible condition defined in [RuXu, Definition 2.27].

From now on, we always assume that $\mathscr{B}_{d, n}$ is generic and admissible. Thus, by [RuXu, Lemma 3.5], we have $\mathcal{S}=\mathcal{T}$ if and only if $\mathrm{c}(\mathcal{S}|k)=\mathrm{c}(\mathcal{T}|k)$ for all $1\leq k\leq n.$ Therefore, the set $\{X_1,\ldots,X_n\},$ as a family of JM-elements for $\mathscr{B}_{d, n}$ in the abstract sense defined in [Ma, Definition 2.4], satisfies the separation condition associated to the weakly cellular basis of $\mathscr{B}_{d, n}$ constructed in [RuXu, Theorem 4.19]. Note that the results in [Ma] also hold for $\mathscr{B}_{d, n}$ with respect to the weakly cellular basis. In particular, we can construct the primitive idempotents of $\mathscr{B}_{d, n}$ following the arguments in [Ma, Section 3].

For each $1\leq k\leq n,$ we define the following set:
\[\mathcal{R}(k) :=\{\mathrm{c}(\mathcal{S}|k)\:|\:\mathcal{S}\in \mathscr{T}_{n}^{ud}(\bm{\lambda})
\text{ for some }(f, \bm{\lambda})\in \Lambda_{d,n}^{+}\}.\]
Suppose that $(f, \bm{\lambda})\in \Lambda_{d,n}^{+}$ and $\mathcal{T}\in \mathscr{T}_{n}^{ud}(\bm{\lambda}).$ We set
\begin{equation}\label{hooklength-idempotentelement11}
E_{\mathcal{T}}=\prod_{k=1}^{n}\bigg(\prod_{\substack{c\in \mathcal{R}(k)\\c\neq \mathrm{c}(\mathcal{T}|k)}}\frac{X_{k}-c}{\mathrm{c}(\mathcal{T}|k)-c}
\bigg).
\end{equation}
By standard arguments in [Ma, Section 3], the elements $\{E_{\mathcal{T}}\:|\:\mathcal{T}\in \mathscr{T}_{n}^{ud}(\bm{\lambda})
\text{ for some }(f, \bm{\lambda})\in \Lambda_{d,n}^{+}\}$ form a complete set of pairwise orthogonal primitive idempotents of $\mathscr{B}_{d, n}.$ Moreover, the elements $X_1,\ldots,X_n$ generate a maximal commutative subalgebra of $\mathscr{B}_{d, n}.$ We also have
\begin{equation}\label{hooklength-idempotentelement1111}
X_{k}E_{\mathcal{T}}=E_{\mathcal{T}}X_{k}=\mathrm{c}(\mathcal{T}|k)E_{\mathcal{T}}.
\end{equation}

\section{Fusion procedure for cyclotomic BMW algebras}
Assume that $(f, \bm{\lambda})\in \Lambda_{d,n}^{+}$ and that $\mathcal{T}=(\mathcal{T}_{1},\ldots,\mathcal{T}_{n})$ is an $n$-updown $\bm{\lambda}$-tableau. Set $\bm{\mu}=\mathcal{T}_{n-1}$ and $\mathcal{U}=(\mathcal{T}_{1},\ldots,\mathcal{T}_{n-1})$ as an updown $\bm{\mu}$-tableau. Let $\bm{\theta}$ be the box that is addable to or removable from $\bm{\mu}$ to get $\bm{\lambda}.$ For simplicity, we set $\mathrm{c}_{k} :=\mathrm{c}(\mathcal{T}|k).$ By \eqref{hooklength-idempotentelement11}, we can rewrite $E_{\mathcal{T}}$ inductively as follows:
\begin{equation}\label{idempotentele-induc}
E_{\mathcal{T}}=E_{\mathcal{U}}\frac{(X_{n}-b_1)\cdots (X_{n}-b_k)}{(\mathrm{c}_{n}-b_1)\cdots (\mathrm{c}_{n}-b_k)},
\end{equation}
where $b_1,\ldots,b_k$ are the contents of all boxes except $\bm{\theta},$ which can be addable to or removable from $\bm{\mu}$ to get a $d$-partition.

We denote by $\{\Lambda_{1},\ldots,\Lambda_{h}\}$ the set of all $d$-partitions obtained from $\bm{\mu}$ by adding a box or removing one. Set $\mathscr{T}_{j} :=(\mathcal{T}_{1},\ldots,\mathcal{T}_{n-1},\Lambda_{j})$ for $1\leq j\leq h.$ Note that $\mathcal{T}\in \{\mathscr{T}_{1},\ldots,\mathscr{T}_{h}\}.$ Since $\mathscr{B}_{d, n}$ is generic, hence it is semisimple. By [RuSi3, (4.16)] we have
\begin{equation}\label{sum-formula11}
E_{\mathcal{U}}=\sum_{j=1}^{h}E_{\mathscr{T}_{j}}.
\end{equation}
The property \eqref{hooklength-idempotentelement1111} implies that the following rational function
\begin{equation}\label{rational-function11}
E_{\mathcal{U}}\frac{u-\text{c}_n}{u-X_{n}}
\end{equation}
is regular at $u=\text{c}_n,$ and by \eqref{sum-formula11}, we get
\begin{equation}\label{sum-function1111}
E_{\mathcal{U}}\frac{u-\text{c}_n}{u-X_{n}}\Big|_{u=\text{c}_n}=E_{\mathcal{T}}.
\end{equation}

For $1\leq i\leq n-1,$ we set
\begin{align}\label{Q-function41}
Q_{i}(u,v;c) :=T_{i}+\frac{q-q^{-1}}{\rho^{-1}cuv-1}+\frac{q-q^{-1}}{1+qcuv}E_{i}.
\end{align}
Let $\phi_{1}(u) :=\frac{cuX_{1}-\rho}{u-X_1}.$ For $k=2,\ldots,n$, we set
\begin{align}\label{phi-function42}
\phi_k(u_1,\ldots,u_{k-1},u)& :=Q_{k-1}(u_{k-1},u;c)\phi_{k-1}(u_1,\ldots,u_{k-2},u)T_{k-1}(u_{k-1}, u)\notag\\
=Q_{k-1}&(u_{k-1},u;c)\cdots Q_{1}(u_{1},u;c)\phi_{1}(u)T_{1}(u_{1}, u)\cdots T_{k-1}(u_{k-1}, u).
\end{align}

From now on, we always set $c :=-q^{-1}.$ The following lemma is inspired by [IMOg2, Lemma 1] and can be proved similarly.
\begin{lemma}\label{phi-phi-phi111}
Assume that $n\geq 1.$ We have
\begin{align}\label{F-PhiEu43}
E_{\mathcal{U}}\phi_n(\mathrm{c}_1,\ldots,\mathrm{c}_{n-1},u)\prod_{r=1}^{n-1}f(u, \mathrm{c}_{r})^{-1}=E_{\mathcal{U}}\frac{cuX_{n}-\rho}{u-X_n}.
\end{align}
\end{lemma}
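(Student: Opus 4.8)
The plan is to argue by induction on $n$, exploiting the recursive definition \eqref{phi-function42} of $\phi_n$ together with the eigenvalue property \eqref{hooklength-idempotentelement1111}. For the base case $n=1$ the tableau $\mathcal{U}$ is empty, so $E_{\mathcal{U}}=1$, the product over $r$ is empty, $X_n=X_1$, and both sides collapse to $\phi_1(u)=\frac{cuX_1-\rho}{u-X_1}$ by definition; hence the identity holds trivially. For $n\geq 2$ I would set $\mathcal{V}=(\mathcal{T}_1,\ldots,\mathcal{T}_{n-2})$, so that $\mathcal{U}=(\mathcal{T}_1,\ldots,\mathcal{T}_{n-1})$ is a one-box extension of $\mathcal{V}$, and rewrite $\phi_n(\mathrm{c}_1,\ldots,\mathrm{c}_{n-1},u)=Q_{n-1}(\mathrm{c}_{n-1},u;c)\,\phi_{n-1}(\mathrm{c}_1,\ldots,\mathrm{c}_{n-2},u)\,T_{n-1}(\mathrm{c}_{n-1},u)$ via \eqref{phi-function42}.

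Two structural facts drive the inductive step. First, $E_{\mathcal{V}}$ is a polynomial in $X_1,\ldots,X_{n-2}$, and these commute with $T_{n-1}$ and $E_{n-1}$; hence $E_{\mathcal{V}}$ commutes with both $Q_{n-1}(\mathrm{c}_{n-1},u;c)$ and $T_{n-1}(\mathrm{c}_{n-1},u)$, as well as with any rational function of $X_{n-1}$. Second, $E_{\mathcal{U}}E_{\mathcal{V}}=E_{\mathcal{V}}E_{\mathcal{U}}=E_{\mathcal{U}}$, since by \eqref{sum-formula11} the idempotent $E_{\mathcal{U}}$ is one of the pairwise orthogonal summands of $E_{\mathcal{V}}$. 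Starting from $E_{\mathcal{U}}\,Q_{n-1}\,\phi_{n-1}\,T_{n-1}\prod_{r=1}^{n-1}f(u,\mathrm{c}_r)^{-1}$, I would insert $E_{\mathcal{U}}=E_{\mathcal{U}}E_{\mathcal{V}}$, commute $E_{\mathcal{V}}$ rightward past $Q_{n-1}$, and peel off the scalar factor $f(u,\mathrm{c}_{n-1})^{-1}$, producing $E_{\mathcal{U}}\,Q_{n-1}\big(E_{\mathcal{V}}\,\phi_{n-1}\prod_{r=1}^{n-2}f(u,\mathrm{c}_r)^{-1}\big)\,T_{n-1}\,f(u,\mathrm{c}_{n-1})^{-1}$. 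Applying the induction hypothesis to $\mathcal{V}$ replaces the bracketed factor by $E_{\mathcal{V}}\frac{cuX_{n-1}-\rho}{u-X_{n-1}}$; commuting $E_{\mathcal{V}}$ back to the left and absorbing it into $E_{\mathcal{U}}$, the entire expression reduces to
\[E_{\mathcal{U}}\,Q_{n-1}(\mathrm{c}_{n-1},u;c)\,\frac{cuX_{n-1}-\rho}{u-X_{n-1}}\,T_{n-1}(\mathrm{c}_{n-1},u)\,f(u,\mathrm{c}_{n-1})^{-1}.\]

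Thus the induction is completed once the single-step identity
\[E_{\mathcal{U}}\,Q_{n-1}(\mathrm{c}_{n-1},u;c)\,\frac{cuX_{n-1}-\rho}{u-X_{n-1}}\,T_{n-1}(\mathrm{c}_{n-1},u)=E_{\mathcal{U}}\,f(u,\mathrm{c}_{n-1})\,\frac{cuX_{n}-\rho}{u-X_{n}}\]
is established, and this is the heart of the matter and the main obstacle. To prove it I would substitute the explicit forms \eqref{Q-function41} and \eqref{Baxterized-elements11}, each of which equals the generator $T_{n-1}$ plus a scalar plus a scalar multiple of $E_{n-1}$, expand the product of three factors, and simplify using the relation $X_n=T_{n-1}X_{n-1}T_{n-1}$ from \eqref{JMur-elements}, the tangle relation $E_{n-1}X_{n-1}X_n=X_{n-1}X_nE_{n-1}=E_{n-1}$ from \eqref{JMur-elements1}, the untwisting relation $T_{n-1}E_{n-1}=E_{n-1}T_{n-1}=\rho^{-1}E_{n-1}$, the Kauffman skein relation $T_{n-1}-T_{n-1}^{-1}=(q-q^{-1})(1-E_{n-1})$, and the eigenvalue relation $X_{n-1}E_{\mathcal{U}}=\mathrm{c}_{n-1}E_{\mathcal{U}}$ from \eqref{hooklength-idempotentelement1111} wherever $X_{n-1}$ becomes adjacent to $E_{\mathcal{U}}$. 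The specialization $c=-q^{-1}$ is what forces the unwanted $E_{n-1}$-contributions and the residual scalar terms to cancel and recombine into the factor $f(u,\mathrm{c}_{n-1})$ of \eqref{Baxterized-elements1111}. I expect the bookkeeping in this single-step computation—tracking the three scalar coefficients and the $E_{n-1}$-terms simultaneously—to be the most delicate part, exactly as in [IMOg2, Lemma 1], the remainder being the formal induction above.
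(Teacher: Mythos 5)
Your proposal follows essentially the same route as the paper: induction on $n$ reducing to the single-step identity $E_{\mathcal{U}}Q_{n-1}(\mathrm{c}_{n-1},u;c)\frac{cuX_{n-1}-\rho}{u-X_{n-1}}T_{n-1}(\mathrm{c}_{n-1},u)=f(u,\mathrm{c}_{n-1})E_{\mathcal{U}}\frac{cuX_{n}-\rho}{u-X_{n}}$, which is literally the paper's \eqref{EUEU-PhiEu5} after rewriting $T_{n-1}(\mathrm{c}_{n-1},u)=f(u,\mathrm{c}_{n-1})T_{n-1}(u,\mathrm{c}_{n-1})^{-1}$ via \eqref{Baxterized-elements111}, and then verified by exactly the expansion and coefficient comparison (using $T_{n-1}X_{n-1}=X_nT_{n-1}^{-1}$, \eqref{JMur-elements1}, the eigenvalue relation, and $c=-q^{-1}$) that the paper carries out. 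Your handling of the inductive step via $\mathcal{V}$ and $E_{\mathcal{U}}E_{\mathcal{V}}=E_{\mathcal{U}}$ is in fact slightly more explicit than the paper's, and the remaining bookkeeping you defer is precisely the content of the paper's \eqref{EUEU-PhiEu7}--\eqref{EUEU-PhiEu12}.
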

\begin{proof}
We shall prove \eqref{F-PhiEu43} by induction on $n.$ For $n=1,$ the situation is trivial.

We set
\begin{align}\label{phi-function421}
\phi'_n(\mathrm{c}_1,&\ldots,\mathrm{c}_{n-1},u)\notag\\
&=Q_{n-1}(\mathrm{c}_{n-1},u;c)\cdots Q_{1}(\mathrm{c}_{1},u;c)\phi_{1}(u)T_{1}(u, \mathrm{c}_{1})^{-1}\cdots T_{n-1}(u, \mathrm{c}_{n-1})^{-1}.
\end{align}
By \eqref{Baxterized-elements111} and \eqref{phi-function421}, in order to show \eqref{F-PhiEu43}, it suffices to prove that
\begin{align}\label{F-PhiEu4321}
E_{\mathcal{U}}\phi'_n(\mathrm{c}_1,\ldots,\mathrm{c}_{n-1},u)=E_{\mathcal{U}}\frac{cuX_{n}-\rho}{u-X_n}.
\end{align}
By the induction hypothesis, it boils down to proving the following equality:
\begin{align}\label{EUEU-PhiEu5}
E_{\mathcal{U}}Q_{n-1}(\mathrm{c}_{n-1},u;c)\frac{cuX_{n-1}-\rho}{u-X_{n-1}}T_{n-1}(u, \mathrm{c}_{n-1})^{-1}=E_{\mathcal{U}}\frac{cuX_{n}-\rho}{u-X_n}.
\end{align}
Since $X_{n}$ commutes with $E_{\mathcal{U}},$ we can rewrite \eqref{EUEU-PhiEu5} as follows:
\begin{align}\label{EUEU-PhiEu6}
E_{\mathcal{U}}(u-X_n)Q_{n-1}(&\mathrm{c}_{n-1},u;c)(cuX_{n-1}-\rho)\notag\\
&=E_{\mathcal{U}}(cuX_{n}-\rho)T_{n-1}(u, \mathrm{c}_{n-1})(u-X_{n-1}).
\end{align}
By \eqref{Baxterized-elements11} and \eqref{Q-function41}, the equality \eqref{EUEU-PhiEu6} becomes
\begin{align}\label{EUEU-PhiEu7}
E_{\mathcal{U}}(u&-X_n)\Big(T_{n-1}+\frac{q-q^{-1}}{\rho^{-1}cu\mathrm{c}_{n-1}-1}+\frac{q-q^{-1}}{1+qcu\mathrm{c}_{n-1}}E_{n-1}\Big)(cuX_{n-1}-\rho)\notag\\
&=E_{\mathcal{U}}(cuX_{n}-\rho)\Big(T_{n-1}+\frac{(q-q^{-1})u}{\mathrm{c}_{n-1}-u}+\frac{(q-q^{-1})u}{u+\rho q\mathrm{c}_{n-1}}E_{n-1}\Big)(u-X_{n-1}).
\end{align}

By definition, we have $T_{n-1}X_{n-1}=X_{n}T_{n-1}-(q-q^{-1})X_{n}+(q-q^{-1})X_{n}E_{n-1}.$ Thus, we get that \eqref{EUEU-PhiEu7} is equivalent to
\begin{align}\label{EUEU-PhiEu8}
E_{\mathcal{U}}(u&-X_n)\Big(cu(X_{n}T_{n-1}-(q-q^{-1})X_{n}+(q-q^{-1})X_{n}E_{n-1})-\rho T_{n-1}\notag\\
&\hspace{2cm}+(q-q^{-1})\rho+\frac{q-q^{-1}}{1+qcu\mathrm{c}_{n-1}}E_{n-1}(cuX_{n-1}-\rho)\Big)\notag\\
&=E_{\mathcal{U}}(cuX_{n}-\rho)\Big(-X_{n}T_{n-1}+(q-q^{-1})X_{n}-(q-q^{-1})X_{n}E_{n-1}+uT_{n-1}\notag\\
&\hspace{2cm}-(q-q^{-1})u+\frac{(q-q^{-1})u}{u+\rho q\mathrm{c}_{n-1}}E_{n-1}(u-X_{n-1})\Big).
\end{align}
Since we have
\begin{align*}
cu^{2}&X_{n}T_{n-1}-cuX_{n}^{2}T_{n-1}-(q-q^{-1})cuX_{n}(u-X_n)-\rho(u-X_n)(T_{n-1}-(q-q^{-1}))\notag\\
=&-cuX_{n}^{2}T_{n-1}+\rho X_nT_{n-1}+(q-q^{-1})(cuX_{n}-\rho)X_{n}+u(cuX_{n}-\rho)(T_{n-1}-(q-q^{-1})),
\end{align*}
it is easy to see that the equality \eqref{EUEU-PhiEu8} comes down to the following equality:
\begin{align}\label{EUEU-PhiEu9}
E_{\mathcal{U}}&(u-X_n)\Big(cuX_{n}E_{n-1}+\frac{1}{1+qcu\mathrm{c}_{n-1}}E_{n-1}(cuX_{n-1}-\rho)\Big)\notag\\
&=E_{\mathcal{U}}(cuX_{n}-\rho)\Big(-X_{n}E_{n-1}+\frac{u}{u+\rho q\mathrm{c}_{n-1}}E_{n-1}(u-X_{n-1})\Big).
\end{align}

By definition, we have $E_{\mathcal{U}}X_{n-1}=\mathrm{c}_{n-1}E_{\mathcal{U}}.$ Hence, we get $E_{\mathcal{U}}X_{n}E_{n-1}=\frac{1}{\mathrm{c}_{n-1}}E_{\mathcal{U}}E_{n-1}$ by \eqref{JMur-elements1}.
According to this, by comparing the coefficients of the terms involving $E_{\mathcal{U}}E_{n-1}X_{n-1}$, we see that it suffices to show that
\begin{align}\label{EUEU-PhiEu11}
\frac{1}{1+qcu\mathrm{c}_{n-1}}\cdot \frac{cu^{2}\mathrm{c}_{n-1}-cu}{\mathrm{c}_{n-1}}=\frac{u}{u+\rho q\mathrm{c}_{n-1}}\cdot \frac{\rho\mathrm{c}_{n-1}-cu}{\mathrm{c}_{n-1}}.
\end{align}
By comparing the coefficients of the terms involving $E_{\mathcal{U}}E_{n-1}$, it suffices to show that
\begin{align}\label{EUEU-PhiEu12}
\frac{cu^{2}-\rho}{\mathrm{c}_{n-1}}+\frac{1}{1+qcu\mathrm{c}_{n-1}}\cdot \frac{\rho-\rho u\mathrm{c}_{n-1}}{\mathrm{c}_{n-1}}=
\frac{u}{u+\rho q\mathrm{c}_{n-1}}\cdot \frac{cu^{2}-\rho u\mathrm{c}_{n-1}}{\mathrm{c}_{n-1}}.
\end{align}
Noting that $c=-q^{-1},$ it is easy to verify that \eqref{EUEU-PhiEu11} and \eqref{EUEU-PhiEu12} are true. Thus, \eqref{EUEU-PhiEu9} holds. The lemma is proved.
\end{proof}

Let $\overline{\phi}_{1}(u) :=(u-v_{1})\cdots (u-v_{d})\frac{cuX_{1}-\rho}{u-X_1}.$ For $k=2,\ldots,n$, we set
\begin{align}\label{phi-function424242}
\overline{\phi}_k(u_1,\ldots,u_{k-1},u)& :=Q_{k-1}(u_{k-1},u;c)\overline{\phi}_{k-1}(u_1,\ldots,u_{k-2},u)T_{k-1}(u_{k-1}, u)\notag\\
=Q_{k-1}&(u_{k-1},u;c)\cdots Q_{1}(u_{1},u;c)\overline{\phi}_{1}(u)T_{1}(u_{1}, u)\cdots T_{k-1}(u_{k-1}, u).
\end{align}

We also define the following rational function:
\begin{align}\label{Phi-function111}
\Phi(u_1,\ldots,u_n) :=\overline{\phi}_1(u_1)\cdots \overline{\phi}_{n-1}(u_1,\ldots,u_{n-1})\overline{\phi}_n(u_1,\ldots,u_{n}).
\end{align}
Recall that the integers $p_{1},\ldots,p_{n}$ associated to $\mathcal{T}$ have been defined as in \eqref{integer-indexbar} or \eqref{integer-indexbar11}.

Now we can state the main result of this paper.

\begin{theorem}\label{main-theorem11112}
The idempotent $E_{\mathcal{T}}$ of $\mathscr{B}_{d, n}$ corresponding to an $n$-updown $\bm{\lambda}$-tableau $\mathcal{T}$ can be derived by the following consecutive evaluations$:$
\begin{equation}\label{idempotents111}
E_{\mathcal{T}}=\frac{1}{f(\mathcal{T})}\Big(\prod_{k=1}^{n}\frac{(u_{k}-\mathrm{c}_{k})^{p_{k}}}{cu_{k}\mathrm{c}_{k}-\rho}\Big)
\Phi(u_1,\ldots,u_n)\Big|_{u_{1}=\emph{c}_1}\cdots\Big|_{u_{n}=\emph{c}_{n}}.
\end{equation}
\end{theorem}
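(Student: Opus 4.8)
The plan is to prove \eqref{idempotents111} by induction on $n$, at each stage stripping off the last factor $\overline{\phi}_n$ and reducing to the idempotent $E_{\mathcal{U}}$ attached to the truncated tableau $\mathcal{U}=(\mathcal{T}_1,\ldots,\mathcal{T}_{n-1})$. Writing $\Phi(u_1,\ldots,u_n)=\Phi_{n-1}(u_1,\ldots,u_{n-1})\,\overline{\phi}_n(u_1,\ldots,u_n)$ with $\Phi_{n-1}:=\overline{\phi}_1(u_1)\cdots\overline{\phi}_{n-1}(u_1,\ldots,u_{n-1})$, and splitting off the $k=n$ factor of the scalar prefactor, the remaining prefactor $\prod_{k=1}^{n-1}\frac{(u_k-\mathrm{c}_k)^{p_k}}{cu_k\mathrm{c}_k-\rho}$ together with $\Phi_{n-1}$ is, by the induction hypothesis, exactly $f(\mathcal{U})E_{\mathcal{U}}$ after the evaluations $u_1=\mathrm{c}_1,\ldots,u_{n-1}=\mathrm{c}_{n-1}$; note that $p_k$ and $\mathrm{c}_k$ depend only on $\mathcal{T}_1,\ldots,\mathcal{T}_k$, so they coincide for $\mathcal{T}$ and $\mathcal{U}$. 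The base case $n=1$ is a direct check: there $\mathcal{U}=\emptyset$, all diagonal sums vanish, $g^s_k=\delta_{k0}$ and $\overline{g}^s_k=0$, and one matches \eqref{idempotents111} against \eqref{hooklength-idempotentelement11} and \eqref{hooklength-indexbar11}.

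For the inductive step I would first perform the evaluations $u_1=\mathrm{c}_1,\ldots,u_{n-1}=\mathrm{c}_{n-1}$ on the whole product. The factor $\overline{\phi}_n$ depends on these variables only through $Q_j(u_j,u_n;c)$ and $T_j(u_j,u_n)$ of \eqref{Q-function41} and \eqref{Baxterized-elements11}, each of which is regular at $u_j=\mathrm{c}_j$ for generic $u_n$; hence each successive substitution is a genuine evaluation on the $\overline{\phi}_n$ factor and the consecutive limit factorizes through the induction hypothesis, leaving
\[
f(\mathcal{U})\,E_{\mathcal{U}}\,\frac{(u_n-\mathrm{c}_n)^{p_n}}{cu_n\mathrm{c}_n-\rho}\,\overline{\phi}_n(\mathrm{c}_1,\ldots,\mathrm{c}_{n-1},u_n).
\]
Since $\overline{\phi}_n=(u_n-v_1)\cdots(u_n-v_d)\,\phi_n$ by \eqref{phi-function424242} and the definition of $\overline{\phi}_1$, Lemma \ref{phi-phi-phi111} converts this into $f(\mathcal{U})\,\frac{(u_n-\mathrm{c}_n)^{p_n}}{cu_n\mathrm{c}_n-\rho}\,h(u_n)\,E_{\mathcal{U}}\frac{cu_nX_n-\rho}{u_n-X_n}$, where $h(u_n):=(u_n-v_1)\cdots(u_n-v_d)\prod_{r=1}^{n-1}f(u_n,\mathrm{c}_r)$ is a scalar.

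The final evaluation $u_n=\mathrm{c}_n$ is where the combinatorics enters, and I expect it to be \textbf{the main obstacle}. Expanding $E_{\mathcal{U}}=\sum_j E_{\mathscr{T}_j}$ by \eqref{sum-formula11} and using \eqref{hooklength-idempotentelement1111}, the operator part becomes $\sum_j\frac{cu_n\gamma_j-\rho}{u_n-\gamma_j}E_{\mathscr{T}_j}$ with $\gamma_j=\mathrm{c}(\mathscr{T}_j|n)$; by the separation condition these contents are pairwise distinct and exactly one equals $\mathrm{c}_n$, namely the term $E_{\mathcal{T}}$. Multiplying by $(u_n-\mathrm{c}_n)^{p_n}$, the factor $cu_n\mathrm{c}_n-\rho$ cancels against the numerator on the $E_{\mathcal{T}}$ term, every $E_{\mathscr{T}_j}$ with $\gamma_j\neq\mathrm{c}_n$ is annihilated in the limit, and there remains $f(\mathcal{U})\big(\lim_{u_n\to\mathrm{c}_n}(u_n-\mathrm{c}_n)^{p_n-1}h(u_n)\big)E_{\mathcal{T}}$. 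To evaluate this limit I would group the contents $\mathrm{c}_1,\ldots,\mathrm{c}_{n-1}$ of $\mathcal{U}$ according to the matrices $m_s(\mathcal{U}),\overline{m}_s(\mathcal{U})$ and rewrite
\[
h(u_n)=\prod_{s,k}(u_n-v_sq^{2k})^{g_k^s}\prod_{s,k}(u_n-v_s^{-1}q^{-2k})^{\overline{g}_k^s},
\]
with $g_k^s,\overline{g}_k^s$ as in \eqref{index-indexbar}; here the factor $(u_n-v_1)\cdots(u_n-v_d)$ supplies precisely the $\delta_{k0}$ summand in $g_k^s$. Genericity of $\mathscr{B}_{d,n}$ forbids accidental coincidences of contents, so in the adding case $\mathrm{c}_n=v_{s_n}q^{2k_n}$ the order of vanishing of $h$ at $u_n=\mathrm{c}_n$ is exactly $g_{k_n}^{s_n}$, which by \eqref{integer-indexbar} equals $1-p_n$, making $(u_n-\mathrm{c}_n)^{p_n-1}h(u_n)$ have a finite nonzero limit; evaluating the surviving factors at $u_n=\mathrm{c}_n$ reproduces $\varphi(\mathcal{U},\mathcal{T})$ of \eqref{hooklength-indexbar11}, the removing case being symmetric via \eqref{integer-indexbar11}. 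This yields $f(\mathcal{U})\varphi(\mathcal{U},\mathcal{T})E_{\mathcal{T}}=f(\mathcal{T})E_{\mathcal{T}}$, and dividing by $f(\mathcal{T})$ closes the induction. The delicate points are the precise bookkeeping of the orders of zeros and poles of $h$ at $u_n=\mathrm{c}_n$ against the exponents $g_k^s,\overline{g}_k^s$, and the justification that the consecutive evaluation genuinely factorizes through the induction hypothesis.
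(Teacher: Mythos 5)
Your proposal is correct and follows essentially the same route as the paper: induction on $n$, reduction to $E_{\mathcal{U}}\overline{\phi}_n$ via the induction hypothesis, Lemma \ref{phi-phi-phi111} to replace $\phi_n$ by $\frac{cu_nX_n-\rho}{u_n-X_n}$ times the scalar $\prod_r f(u_n,\mathrm{c}_r)$, and then the evaluation at $u_n=\mathrm{c}_n$ using \eqref{sum-formula11} and the identity $f(\mathcal{T})=f(\mathcal{U})\varphi(\mathcal{U},\mathcal{T})$. The only difference is cosmetic: you unpack the regularity and value of the scalar $\frac{f(\mathcal{U})}{f(\mathcal{T})}(u_n-v_1)\cdots(u_n-v_d)\prod_r f(u_n,\mathrm{c}_r)(u_n-\mathrm{c}_n)^{p_n-1}$ at $u_n=\mathrm{c}_n$ via the exponents $g_k^s,\overline{g}_k^s$ and genericity, a point the paper simply asserts by reference to \eqref{hooklength-indexbar11}.
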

\begin{proof}
We shall prove the theorem by induction on $n.$ For $n=1,$ we have $p_{1}=0$ by Proposition \ref{special-propo}. Thus, we get that the right-hand side of \eqref{idempotents111} is equal to
\begin{align}\label{n-1-istrue}
\frac{1}{f(\mathcal{T})}&\frac{(u_{1}-v_{1})\cdots (u_{1}-v_{d})}{cu_{1}\mathrm{c}_{1}-\rho}
\frac{cu_{1}X_{1}-\rho}{u_{1}-X_1}\Big|_{u_{1}=\mathrm{c}_1}\notag\\
&=\frac{1}{f(\mathcal{T})}\frac{(u_{1}-v_{1})\cdots (u_{1}-v_{d})}{u_{1}-\mathrm{c}_{1}}\frac{u_{1}-\mathrm{c}_{1}}{cu_{1}\mathrm{c}_{1}-\rho}\frac{cu_{1}X_{1}-\rho}{u_{1}-X_1}\Big|_{u_{1}=\mathrm{c}_1}.
\end{align}
Moreover, by \eqref{hooklength-indexbar11}, we have \[f(\mathcal{T})=\prod_{1\leq k\leq d;v_{k}\neq \mathrm{c}_{1}}(\mathrm{c}_{1}-v_{k}).\]
Therefore, it is easy to see that \eqref{n-1-istrue} is equal to $E_{\mathcal{T}}$ by \eqref{hooklength-idempotentelement1111} and \eqref{sum-function1111}.

For $n\geq 2,$ by the induction hypothesis we can write the right-hand side of \eqref{idempotents111} as follows:
\begin{align}\label{n-1-istrue2}
\frac{f(\mathcal{U})}{f(\mathcal{T})}\frac{(u_{n}-\mathrm{c}_{n})^{p_{n}}}{cu_{n}\mathrm{c}_{n}-\rho}E_{\mathcal{U}}
\overline{\phi}_n(\mathrm{c}_{1},\ldots,\mathrm{c}_{n-1},u_n)\Big|_{u_{n}=\mathrm{c}_{n}}.
\end{align}
Note that $\overline{\phi}_n(\mathrm{c}_{1},\ldots,\mathrm{c}_{n-1},u_n)=(u_{n}-v_{1})\cdots (u_{n}-v_{d})\phi_n(\mathrm{c}_{1},\ldots,\mathrm{c}_{n-1},u_n).$ By \eqref{F-PhiEu43}, we can rewrite the expression \eqref{n-1-istrue2} as
\begin{align}\label{n-1-istrue3}
\frac{f(\mathcal{U})}{f(\mathcal{T})}\frac{(u_{n}-\mathrm{c}_{n})^{p_{n}}}{cu_{n}\mathrm{c}_{n}-\rho}(u_{n}-v_{1})\cdots (u_{n}-v_{d})\prod_{r=1}^{n-1}f(u_{n}, \mathrm{c}_{r})E_{\mathcal{U}}\frac{cu_{n}X_{n}-\rho}{u_{n}-X_n}\Big|_{u_{n}=\mathrm{c}_{n}}.
\end{align}

By \eqref{hooklength-indexbar11}, we see that
\begin{align*}
\frac{f(\mathcal{U})}{f(\mathcal{T})}(u_{n}&-v_{1})\cdots (u_{n}-v_{d})\prod_{r=1}^{n-1}f(u_{n}, \mathrm{c}_{r})(u_{n}-\mathrm{c}_{n})^{p_{n}-1}\notag\\
&=\frac{f(\mathcal{U})}{f(\mathcal{T})}(u_{n}-v_{1})\cdots (u_{n}-v_{d})\prod_{r=1}^{n-1}\frac{(u_{n}-q^{2}\mathrm{c}_{r})(u_{n}-q^{-2}\mathrm{c}_{r})}{(u_{n}-\mathrm{c}_{r})^{2}}(u_{n}-\mathrm{c}_{n})^{p_{n}-1}
\end{align*}
is regular at $u_n=\mathrm{c}_{n}$ and is equal to $1.$ Thus, the expression \eqref{n-1-istrue3} equals
\begin{align}\label{n-1-istrue4}
E_{\mathcal{U}}\frac{u_{n}-\mathrm{c}_{n}}{u_{n}-X_n}\frac{cu_{n}X_{n}-\rho}{cu_{n}\mathrm{c}_{n}-\rho}\Big|_{u_{n}=\mathrm{c}_{n}}.
\end{align}
By \eqref{sum-function1111}, we see that \eqref{n-1-istrue4} is equal to
\begin{align}\label{n-1-istrue5}
E_{\mathcal{T}}\frac{cu_{n}X_{n}-\rho}{cu_{n}\mathrm{c}_{n}-\rho}\Big|_{u_{n}=\mathrm{c}_{n}}.
\end{align}
By \eqref{hooklength-idempotentelement1111}, we have $E_{\mathcal{T}}X_{n}=\mathrm{c}_{n}E_{\mathcal{T}}.$ Thus, we get that the expression \eqref{n-1-istrue5}, that is, the right-hand side of \eqref{idempotents111} equals $E_{\mathcal{T}}.$
 \end{proof}

\begin{remark}\label{remark111}
Let $\mathscr{H}_{d, n}$ be the cyclotomic Hecke algebra defined in [AK]. It has been proved in [RuXu, Proposition 4.1] that $\mathscr{H}_{d, n}$ is isomorphic to the quotient of $\mathscr{B}_{d, n}$ by the two-sided ideal generated by all $E_{i}.$ In the process of taking quotient, the parameter $\rho$ disappears; however, the parameter $c$ is reserved and can be arbitrary. If we replace the $T_{i}(u, v),$ $Q_{i}(u,v;c),$ $\phi_{1}(u)$ in \eqref{phi-function42} with
\begin{align*}
\overline{T}_{i}(u,v)=T_{i}+\frac{(q-q^{-1})u}{v-u},\quad \overline{Q}_{i}(u,v;c) :=T_{i}+\frac{q-q^{-1}}{cuv-1},\quad \psi_{1}(u) :=\frac{cuX_{1}-1}{u-X_1},
\end{align*}
it is easy to see that the analogue of Lemma \ref{phi-phi-phi111} holds.

Let $\overline{\psi}_{1}(u) :=(u-v_{1})\cdots (u-v_{d})\frac{cuX_{1}-1}{u-X_1},$ and for $k=2,\ldots,n$, set
\begin{align*}
\overline{\psi}_k(u_1,\ldots,u_{k-1},u)& :=\overline{Q}_{k-1}(u_{k-1},u;c)\overline{\psi}_{k-1}(u_1,\ldots,u_{k-2},u)\overline{T}_{k-1}(u_{k-1}, u).
\end{align*}

We also define a rational function by
\begin{align*}
\Upsilon(u_1,\ldots,u_n) :=\overline{\psi}_1(u_1)\cdots \overline{\psi}_{n-1}(u_1,\ldots,u_{n-1})\overline{\psi}_n(u_1,\ldots,u_{n}).
\end{align*}
Then it is easy to see that the analogue of Theorem \ref{main-theorem11112} is true. Thus, we get a one-parameter family of the fusion procedures for cyclotomic Hecke algebras, generalizing the results obtained in [OgPA2].
\end{remark}

\section{Appendix. Fusion procedure for cyclotomic Nazarov-Wenzl algebras}
When studying the representations of Brauer algebras, Nazarov [Na1] introduced a class of infinite dimensional algebras under the name affine Wenzl algebras. In order to study finite dimensional irreducible representations of affine Wenzl algebras, Ariki, Mathas and Rui [AMR] defined the finite dimensional quotients of them, known as the cyclotomic Nazarov-Wenzl algebras. Cyclotomic Nazarov-Wenzl algebras are related to degenerate cyclotomic Hecke algebras just in the same way that cyclotomic BMW algebras are connected with cyclotomic Hecke algebras. Cyclotomic Nazarov-Wenzl algebras have been studied by many authors; see [Go3-4, RuSi1-2, Xu] and so on.

\subsection{Cyclotomic Nazarov-Wenzl algebras}
\begin{definition}
Suppose that $\mathbb{K}$ is an algebraically closed field containing $\omega_{j}$ ($0\leq j\leq d-1$), $v_i$ ($1\leq i\leq d$), and the invertible element $2.$\vskip2mm

Fix $n\geq 1.$ The cyclotomic Nazarov-Wenzl algebra $\mathscr{W}_{d, n}$ is the $\mathbb{K}$-algebra generated by the elements $S_{i}, E_{i}$ ($1\leq i\leq n-1$) and $X_{j}$ ($1\leq j\leq n$) satisfying the following relations:\vskip2mm

(1) (Involutions) $S_{i}^{2}=1$ for $1\leq i\leq n-1.$

(2) (Idempotent relations) $E_{i}^{2}=\omega_{0} E_{i}$ for $1\leq i\leq n-1.$

(3) (Affine braid relations)

\hspace{0.7cm}(a) $S_{i}S_{i+1}S_{i}=S_{i+1}S_{i}S_{i+1}$ and $S_{i}S_{j}=S_{j}S_{i}$ if $|i-j|\geq 2.$

\hspace{0.7cm}(b) $S_{i}X_{j}=X_{j}S_{i}$ if $j\neq i, i+1.$

(4) (Tangle relations)

\hspace{0.7cm}(a) $E_{i}E_{i\pm 1}E_{i}=E_{i}.$

\hspace{0.7cm}(b) $S_{i}S_{i\pm 1}E_{i}=E_{i\pm 1}E_{i}$ and $E_{i}S_{i\pm 1}S_{i}=E_{i}E_{i\pm 1}.$

\hspace{0.7cm}(c) For $1\leq k\leq d-1,$ $E_{1}X_{1}^{k}E_{1}=\omega_{k}E_{1}.$

(5) (Untwisting relations) $S_{i}E_{i}=E_{i}S_{i}=E_{i}$ for $1\leq i\leq n-1.$

(6) (Skein relations) $S_{i}X_{i}-X_{i+1}S_{i}=E_{i}-1$ for $1\leq i\leq n-1.$

(7) (Anti-symmetry relations) $E_{i}(X_{i}+X_{i+1})=(X_{i}+X_{i+1})E_{i}=0$ for $1\leq i\leq n-1.$

(8) (Commutative relations)

\hspace{0.7cm}(a) $S_{i}E_{j}=E_{j}S_{i}$ and $E_{i}E_{j}=E_{j}E_{i}$ if $|i-j|\geq 2.$

\hspace{0.7cm}(b) $E_{i}X_{j}=X_{j}E_{i}$ if $j\neq i, i+1.$

\hspace{0.7cm}(c) $X_{i}X_{j}=X_{j}X_{i}$ for $1\leq i,j \leq n.$

(9) (Cyclotomic relation) $(X_1-v_1)(X_1-v_2)\cdots (X_1-v_d)=0.$
\end{definition}

We define the following elements:
\begin{equation}\label{Baxterized-elements11cde}
S_{i}(u,v)=S_{i}+\frac{1}{v-u}-\frac{1}{v-u+\frac{\omega_{0}}{2}-1}E_{i}\quad\mbox{for}~1\leq i\leq n-1.
\end{equation}
By using the fact that $E_{i}^{2}=\omega_{0}E_{i}$, we can easily get
\begin{equation}\label{Baxterized-elements111cde}
S_{i}(u,v)S_{i}(v,u)=g(u,v)\quad\mbox{for}~1\leq i\leq n-1,
\end{equation}
where
\begin{equation}\label{Baxterized-elements1111cde}
g(u,v)=g(v,u)=\frac{(u-v+1)(u-v-1)}{(u-v)^{2}}.
\end{equation}

\subsection{Combinatorics}
Suppose that $(f, \bm{\lambda})\in \Lambda_{d,n}^{+}$ and $\mathfrak{s}=(\mathfrak{s}_{1},\ldots,\mathfrak{s}_{n})\in \mathscr{T}_{n}^{ud}(\bm{\lambda}).$ We can define the integers $d_{k}^{s},$ $\overline{d}_{k}^{s},$ $g_{k}^{s},$ $\overline{g}_{k}^{s}$ and some integers $p_{1},\ldots,p_{n}$ associated to $\mathfrak{s}$ in exactly the same way as those related to some $\mathcal{T}$ defined in Subsection 2.2. We shall follow the notations and only emphasize the differences.

Set
\begin{align}\label{symme-formscde}
\mathrm{c}(\mathfrak{s}|k)=
\begin{cases}
v_{s}+j-i & \text{if } \mathfrak{s}_{k}=\mathfrak{s}_{k-1}\cup ((i,j),s),
\\
-v_{s}+i-j & \text{if } \mathfrak{s}_{k-1}=\mathfrak{s}_{k}\cup ((i,j),s).
\end{cases}
\end{align}
Given a box $\bm{\beta}=((i,j),s),$ we define the content of it by
\begin{align}\label{symme-forms11113344cde}
\mathrm{c}(\mathcal{U}|\bm{\beta})=
\begin{cases}
v_{s}+j-i & \text{if }\bm{\beta}\text{ is an addable box of }\mathfrak{s},
\\
-v_{s}+i-j & \text{if }\bm{\beta}\text{ is a removable box of }\mathfrak{s}.
\end{cases}
\end{align}

Assume that $(f, \bm{\lambda})\in \Lambda_{d,n}^{+},$ $\mathfrak{t}=(\mathfrak{t}_{1},\ldots,\mathfrak{t}_{n})$ is an $n$-updown $\bm{\lambda}$-tableau and that
$\mathfrak{u}=(\mathfrak{t}_{1},\ldots,\mathfrak{t}_{n-1}).$ We then define the element $g(\mathfrak{t})$ inductively by
\begin{equation}\label{hooklength-indexbar11cde}
g(\mathfrak{t})=g(\mathfrak{u})\psi(\mathfrak{u}, \mathfrak{t}),
\end{equation}
where
\begin{equation*}
\psi(\mathfrak{u}, \mathfrak{t})=\prod_{\substack{k\neq k_{n}\\k\in \mathbb{Z}}}(k_{n}-k)^{g_{k}^{s_{n}}}\prod_{\substack{1\leq t\leq d; t\neq s_{n}\\k\in \mathbb{Z}}}\hspace{-2mm}(v_{s_{n}}-v_{t}+k_{n}-k)^{g_{k}^{t}}\prod_{\substack{1\leq r\leq d\\k\in \mathbb{Z}}}(v_{s_{n}}+v_{r}+k_{n}+k)^{\overline{g}_{k}^{r}}
\end{equation*}
if $\mathfrak{t}_{n}$ is obtained from $ \mathfrak{t}_{n-1}$ by adding a box $((i_n,j_n),s_n)$, where $k_n=j_n-i_n;$
\begin{equation*}
\psi(\mathfrak{u}, \mathfrak{t})=\prod_{\substack{k\neq k_{n}'\\k\in \mathbb{Z}}}(-k_{n}'+k)^{\overline{g}_{k}^{s_{n}'}}\prod_{\substack{1\leq t\leq d; t\neq s_{n}'\\k\in \mathbb{Z}}}(-v_{s_{n}'}+v_{t}-k_{n}'+k)^{\overline{g}_{k}^{t}}\prod_{\substack{1\leq r\leq d\\k\in \mathbb{Z}}}(-v_{s_{n}'}-v_{r}-k_{n}'-k)^{g_{k}^{r}}
\end{equation*}
if $\mathfrak{t}_{n}$ is obtained from $\mathfrak{t}_{n-1}$ by removing a box $((i_{n}',j_{n}'),s_{n}')$, where $k_{n}'=j_{n}'-i_{n}'.$

The following proposition is inspired by [IM, Proposition 3.3] and can be proved similarly.
\begin{proposition}\label{special-propocde}
If $\bm{\lambda}$ is a $d$-partition of $n$ and $\mathfrak{t}=(\mathfrak{t}_{1},\ldots,\mathfrak{t}_{n})$ is an $n$-updown $\bm{\lambda}$-tableau, then $p_1,\ldots,p_{n}$ are all equal to zero, and $g(\mathfrak{t})$ is exactly equal to $\Theta_{\bm{\lambda}}(Q)^{-1}$ defined in $[\emph{ZL}, (3.2)]$ when $d=m$ and $v_{s}=q_{s}$ for $1\leq s\leq m.$
\end{proposition}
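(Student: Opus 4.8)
The plan is to reduce to standard tableaux and then replay, mutatis mutandis, the proof of Proposition~\ref{special-propo}, replacing the multiplicative BMW contents $v_sq^{2(j-i)}$ by the additive Nazarov--Wenzl contents $v_s+(j-i)$ of \eqref{symme-formscde}. First I would note that when $\bm{\lambda}$ is a genuine $d$-partition of $n$, every $n$-updown $\bm{\lambda}$-tableau $\mathfrak{t}$ is forced to be a standard tableau: the total number of boxes changes by $\pm1$ at each of the $n$ steps and starts at $0$, so reaching the $n$-box diagram $\bm{\lambda}$ leaves no room for any removal. Hence every box of $(\emptyset,\mathfrak{t}_1,\ldots,\mathfrak{t}_{n-1})$ is added exactly once, the matrices $\overline{m}_s(\mathfrak{u})$ vanish, and $\overline{d}^{\,s}_k=\overline{g}^{\,s}_k=0$ for all $k,s$.

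To see $p_1=\cdots=p_n=0$ I would invoke the same combinatorial identity that underlies [IM, Proposition~3.3]. For a single partition $\mu$ with $d_k$ cells on the $k$-th diagonal, the index $g_k=\delta_{k0}+d_{k-1}+d_{k+1}-2d_k$ of \eqref{index-indexbar} equals the number of addable boxes of $\mu$ on diagonal $k$ minus the number of removable boxes of $\mu$ on diagonal $k$; indeed the cells of $\mu$ on a fixed diagonal form a contiguous run, so each diagonal carries at most one addable and at most one removable box and never both, whence $g_k\in\{-1,0,1\}$. At the step producing $\mathfrak{t}_k$ the added box $((i_k,j_k),s_k)$ is by definition an addable box of $\mathfrak{t}_{k-1}^{(s_k)}$ on the diagonal $k_k=j_k-i_k$, so the corresponding index equals $1$ and $p_k=1-1=0$.

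For the identification $g(\mathfrak{t})=\Theta_{\bm{\lambda}}(Q)^{-1}$ I would unwind the recursion \eqref{hooklength-indexbar11cde} from $g(\emptyset)=1$. Since $\overline{g}^{\,s}_k=0$, the third product in $\psi(\mathfrak{u},\mathfrak{t})$ is trivial, and the first two products collapse, via the addable-minus-removable reading of the $g^t_k$ together with the content dictionary \eqref{symme-forms11113344cde}, into
\[
\psi(\mathfrak{u},\mathfrak{t})=\frac{\prod_{\gamma}(\mathrm{c}_\alpha-\mathrm{c}_\gamma)}{\prod_{\beta}(\mathrm{c}_\alpha+\mathrm{c}_\beta)},
\]
where $\alpha=((i_n,j_n),s_n)$ is the box just added (so $\mathrm{c}_\alpha=v_{s_n}+k_n$), $\gamma$ runs over the addable boxes of $\bm{\mu}$ other than $\alpha$, and $\beta$ over its removable boxes, contents being taken as in \eqref{symme-forms11113344cde}. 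The sum $\mathrm{c}_\alpha+\mathrm{c}_\beta$ (rather than a difference) appears precisely because a removable content carries the opposite sign $-v_s+i-j$. Multiplying these factors over all $n$ steps — the $n=1$ factor $\prod_{t\ne s_1}(v_{s_1}-v_t)$ arising from the addable boxes on the $0$-diagonal of the still-empty components — telescopes $g(\mathfrak{t})$ into a product over boxes of $\bm{\lambda}$, which I would then match term by term against the explicit expression in [ZL,~(3.2)] under the specialization $d=m$, $v_s=q_s$.

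The part I expect to be genuinely delicate is not conceptual but the sign- and factor-bookkeeping in this last step: one must convert the diagonal-indexed products $\prod_k(\cdots)^{g_k^t}$ into content-indexed products, keep straight the opposite sign convention for removable contents in \eqref{symme-forms11113344cde}, and verify that after telescoping no addable or removable factor is lost or double-counted, so that the result is \emph{exactly} $\Theta_{\bm{\lambda}}(Q)^{-1}$ and not merely proportional to it. Because this is the additive mirror of the computation already completed for $f(\mathcal{T})=F_{\bm{\lambda}}^{-1}$ in Proposition~\ref{special-propo}, I expect it to go through once the dictionary $(q^{2k_n}-q^{2k})\leftrightarrow(k_n-k)$ and $v_sq^{2(j-i)}\leftrightarrow v_s+(j-i)$ is fixed.
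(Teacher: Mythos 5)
The paper offers no proof of this proposition beyond the remark that it ``is inspired by [IM, Proposition 3.3] and can be proved similarly,'' and your argument is precisely the intended one: when $f=0$ every step of the updown tableau is an addition, the identity $g_k=\delta_{k0}+d_{k-1}+d_{k+1}-2d_k=(\text{addable on diagonal }k)-(\text{removable on diagonal }k)$ forces $p_k=1-1=0$, and the recursion \eqref{hooklength-indexbar11cde} telescopes (with the third product trivial since $\overline{g}^{\,s}_k=0$) into the content product of [ZL, (3.2)]. Your sketch is correct and supplies strictly more detail than the paper; the one step you defer, the term-by-term match with $\Theta_{\bm{\lambda}}(Q)^{-1}$, is exactly the bookkeeping the paper also omits.
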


\subsection{Idempotents of $\mathscr{W}_{d, n}$}
Following [AMR, Definition 4.3], we say that $\mathscr{W}_{d, n}$ is generic if the parameters $v_i$, $1\leq i\leq d$, satisfy the conditions (1) the characteristic $p$ of $\mathbb{K}$ satisfies $p=0$ or $p> 2n;$ (2) $|r|\geq 2n$ whenever there exists $r\in \mathbb{Z}$ such that either $v_{i}\pm v_{j}=r$ and $i\neq j,$ or $2v_{i}=r.$ Following [Go3, Definition 4.2], we say that $\mathscr{W}_{d, n}$ is admissible if the set $\{E_{1}, E_{1}X_{1},\ldots,E_{1}X_{1}^{d-1}\}$ is linearly independent in $\mathscr{B}_{d, 2}.$ It has been proved by Goodman [Go3, Theorem 5.2] that this admissible condition coincides with the $\bm{\mathrm{u}}$-admissible condition defined in [AMR, Definition 3.6].

From now on, we always assume that $\mathscr{W}_{d, n}$ is generic and admissible. Thus, by [AMR, Lemma 4.4], we have $\mathfrak{s}=\mathfrak{t}$ if and only if $\mathrm{c}(\mathfrak{s}|k)=\mathrm{c}(\mathfrak{t}|k)$ for all $1\leq k\leq n.$ Therefore, the set $\{X_1,\ldots,X_n\},$ as a family of JM-elements for $\mathscr{W}_{d, n}$ in the abstract sense defined in [Ma, Definition 2.4], satisfies the separation condition associated to the cellular basis of $\mathscr{W}_{d, n}$ constructed in [AMR, Theorem 7.17]. In particular, we can construct the primitive idempotents of $\mathscr{W}_{d, n}$ following the arguments in [Ma, Section 3].

For each $1\leq k\leq n,$ we define the following set:
\[\mathscr{R}(k) :=\{\mathrm{c}(\mathfrak{s}|k)\:|\:\mathfrak{s}\in \mathscr{T}_{n}^{ud}(\bm{\lambda})
\text{ for some }(f, \bm{\lambda})\in \Lambda_{d,n}^{+}\}.\]
Suppose that $(f, \bm{\lambda})\in \Lambda_{d,n}^{+}$ and $\mathfrak{t}\in \mathscr{T}_{n}^{ud}(\bm{\lambda}).$ We set
\begin{equation}\label{hooklength-idempotentelement11cde}
E_{\mathfrak{t}}=\prod_{k=1}^{n}\bigg(\prod_{\substack{a\in \mathscr{R}(k)\\a\neq \mathrm{c}(\mathfrak{t}|k)}}\frac{X_{k}-a}{\mathrm{c}(\mathfrak{t}|k)-a}
\bigg).
\end{equation}
By standard arguments in [Ma, Section 3], the elements $\{E_{\mathfrak{t}}\:|\:\mathfrak{t}\in \mathscr{T}_{n}^{ud}(\bm{\lambda})
\text{ for some }(f, \bm{\lambda})\in \Lambda_{d,n}^{+}\}$ form a complete set of pairwise orthogonal primitive idempotents of $\mathscr{W}_{d, n}.$ Moreover, the elements $X_1,\ldots,X_n$ generate a maximal commutative subalgebra of $\mathscr{W}_{d, n}.$ We also have
\begin{equation}\label{hooklength-idempotentelement1111cde}
X_{k}E_{\mathfrak{t}}=E_{\mathfrak{t}}X_{k}=\mathrm{c}(\mathfrak{t}|k)E_{\mathfrak{t}}.
\end{equation}

\subsection{Fusion procedure for cyclotomic Nazarov-Wenzl algebras}
Assume that $(f, \bm{\lambda})\in \Lambda_{d,n}^{+}$ and that $\mathfrak{t}=(\mathfrak{t}_{1},\ldots,\mathfrak{t}_{n})$ is an $n$-updown $\bm{\lambda}$-tableau. Set $\bm{\mu}=\mathfrak{t}_{n-1}$ and $\mathfrak{u}=(\mathfrak{t}_{1},\ldots,\mathfrak{t}_{n-1})$ as an updown $\bm{\mu}$-tableau. Let $\bm{\theta}$ be the box that is addable to or removable from $\bm{\mu}$ to get $\bm{\lambda}.$ For simplicity, we set $\mathrm{c}_{k} :=\mathrm{c}(\mathfrak{t}|k).$ By \eqref{hooklength-idempotentelement11cde}, we can rewrite $E_{\mathfrak{t}}$ inductively as follows:
\begin{equation}\label{idempotentele-induccde}
E_{\mathfrak{t}}=E_{\mathfrak{t}}\frac{(X_{n}-a_1)\cdots (X_{n}-a_k)}{(\mathrm{c}_{n}-a_1)\cdots (\mathrm{c}_{n}-a_k)},
\end{equation}
where $a_1,\ldots,a_k$ are the contents of all boxes except $\bm{\theta},$ which can be addable to or removable from $\bm{\mu}$ to get a $d$-partition.

We denote by $\{\Delta_{1},\ldots,\Delta_{e}\}$ the set of all $d$-partitions obtained from $\bm{\mu}$ by adding a box or removing one. Set $\mathscr{S}_{j} :=(\mathfrak{t}_{1},\ldots,\mathfrak{t}_{n-1},\Delta_{j})$ for $1\leq j\leq e.$ Note that $\mathfrak{t}\in \{\mathscr{S}_{1},\ldots,\mathscr{S}_{e}\}.$ Since $\mathscr{W}_{d, n}$ is generic, hence it is semisimple. By [AMR, Theorem 5.3 a)] we have
\begin{equation}\label{sum-formula11cde}
E_{\mathfrak{u}}=\sum_{j=1}^{e}E_{\mathscr{S}_{j}}.
\end{equation}
The equality \eqref{hooklength-idempotentelement1111cde} implies that the following rational function
\begin{equation}\label{rational-function11cde}
E_{\mathfrak{u}}\frac{u-\text{c}_n}{u-X_{n}}
\end{equation}
is regular at $u=\text{c}_n,$ and by \eqref{sum-formula11cde}, we get
\begin{equation}\label{sum-function1111cde}
E_{\mathfrak{u}}\frac{u-\text{c}_n}{u-X_{n}}\Big|_{u=\text{c}_n}=E_{\mathfrak{t}}.
\end{equation}

For $1\leq i\leq n-1,$ we set
\begin{align}\label{Q-function41cde}
R_{i}(u,v;c) :=S_{i}+\frac{1}{u+v+c}-\frac{1}{u+v}E_{i}.
\end{align}
Let $\varphi_{1}(u) :=\frac{u+X_{1}+c}{u-X_1}.$ For $k=2,\ldots,n$, we set
\begin{align}\label{phi-function42cde}
\varphi_k(u_1,\ldots,u_{k-1},u)& :=R_{k-1}(u_{k-1},u;c)\varphi_{k-1}(u_1,\ldots,u_{k-2},u)S_{k-1}(u_{k-1}, u)\notag\\
=R_{k-1}&(u_{k-1},u;c)\cdots R_{1}(u_{1},u;c)\varphi_{1}(u)S_{1}(u_{1}, u)\cdots S_{k-1}(u_{k-1}, u).
\end{align}

From now on, we always set $c :=1-\frac{\omega_{0}}{2}.$ The following lemma is inspired by [IMOg2, Lemma 1] and can be proved similarly.
\begin{lemma}\label{phi-phi-phi111cde}
Assume that $n\geq 1.$ We have
\begin{align}\label{F-PhiEu43cde}
E_{\mathfrak{u}}\varphi_n(\mathrm{c}_1,\ldots,\mathrm{c}_{n-1},u)\prod_{r=1}^{n-1}g(u, \mathrm{c}_{r})^{-1}=E_{\mathfrak{u}}\frac{u+X_{n}+c}{u-X_n}.
\end{align}
\end{lemma}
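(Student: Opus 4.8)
The plan is to mirror the proof of Lemma~\ref{phi-phi-phi111} step for step, replacing each BMW relation by its Nazarov--Wenzl counterpart, and to argue by induction on $n$. The base case $n=1$ is immediate: there $\mathfrak{u}$ is empty, so $E_{\mathfrak{u}}=1$, the product $\prod_{r=1}^{0}g(u,\mathrm{c}_r)^{-1}$ is empty, and $\varphi_1(u)=\frac{u+X_1+c}{u-X_1}$ already coincides with the right-hand side of \eqref{F-PhiEu43cde}. For the inductive step I first pass to an inverse form of the Baxterized factors: using the symmetry $g(u,v)=g(v,u)$ together with $S_i(u,v)S_i(v,u)=g(u,v)$ from \eqref{Baxterized-elements111cde}, I set
\[
\varphi'_n(\mathrm{c}_1,\ldots,\mathrm{c}_{n-1},u):=R_{n-1}(\mathrm{c}_{n-1},u;c)\cdots R_1(\mathrm{c}_1,u;c)\,\varphi_1(u)\,S_1(u,\mathrm{c}_1)^{-1}\cdots S_{n-1}(u,\mathrm{c}_{n-1})^{-1},
\]
so that $\varphi_n(\mathrm{c}_1,\ldots,\mathrm{c}_{n-1},u)\prod_{r=1}^{n-1}g(u,\mathrm{c}_r)^{-1}=\varphi'_n(\mathrm{c}_1,\ldots,\mathrm{c}_{n-1},u)$. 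Thus \eqref{F-PhiEu43cde} becomes equivalent to $E_{\mathfrak{u}}\varphi'_n=E_{\mathfrak{u}}\frac{u+X_n+c}{u-X_n}$, and $\varphi'_n$ satisfies $\varphi'_n=R_{n-1}(\mathrm{c}_{n-1},u;c)\,\varphi'_{n-1}\,S_{n-1}(u,\mathrm{c}_{n-1})^{-1}$ by construction.

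Next I reduce to a single-strand identity by feeding in the induction hypothesis. Writing $\mathfrak{v}:=(\mathfrak{t}_1,\ldots,\mathfrak{t}_{n-2})$, the two facts I need are: (i) $E_{\mathfrak{u}}=E_{\mathfrak{u}}E_{\mathfrak{v}}$, which follows from the branching rule \eqref{sum-formula11cde} applied with $n-1$ in place of $n$ (so $E_{\mathfrak{v}}$ is the sum of the idempotents of all one-box extensions of $\mathfrak{v}$, one of which is $E_{\mathfrak{u}}$) together with orthogonality of primitive idempotents; and (ii) $E_{\mathfrak{v}}$, being a polynomial in $X_1,\ldots,X_{n-2}$, commutes with $R_{n-1}(\mathrm{c}_{n-1},u;c)$ and with $S_{n-1}(u,\mathrm{c}_{n-1})^{\pm1}$, because $S_{n-1}X_j=X_jS_{n-1}$ and $E_{n-1}X_j=X_jE_{n-1}$ for $j\le n-2$. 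Combining (i), (ii), the induction hypothesis $E_{\mathfrak{v}}\varphi'_{n-1}=E_{\mathfrak{v}}\frac{u+X_{n-1}+c}{u-X_{n-1}}$, and commuting $E_{\mathfrak{v}}$ back out (using $E_{\mathfrak{u}}E_{\mathfrak{v}}=E_{\mathfrak{u}}$ once more), I may replace the inner $\varphi'_{n-1}$ by $\frac{u+X_{n-1}+c}{u-X_{n-1}}$. The lemma then reduces to the exact analogue of \eqref{EUEU-PhiEu5}, namely
\[
E_{\mathfrak{u}}\,R_{n-1}(\mathrm{c}_{n-1},u;c)\,\frac{u+X_{n-1}+c}{u-X_{n-1}}\,S_{n-1}(u,\mathrm{c}_{n-1})^{-1}=E_{\mathfrak{u}}\,\frac{u+X_n+c}{u-X_n}.
\]

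Finally I prove this single-strand identity by direct computation. Since $X_n$ commutes with $E_{\mathfrak{u}}$, I clear the denominators $u-X_{n-1}$ and $u-X_n$ to obtain a polynomial identity, then substitute $R_{n-1}(\mathrm{c}_{n-1},u;c)=S_{n-1}+\frac{1}{\mathrm{c}_{n-1}+u+c}-\frac{1}{\mathrm{c}_{n-1}+u}E_{n-1}$ from \eqref{Q-function41cde} and $S_{n-1}(u,\mathrm{c}_{n-1})=S_{n-1}+\frac{1}{\mathrm{c}_{n-1}-u}-\frac{1}{\mathrm{c}_{n-1}-u-c}E_{n-1}$ from \eqref{Baxterized-elements11cde} (here $\frac{\omega_0}{2}-1=-c$). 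The skein relation rewrites $S_{n-1}X_{n-1}=X_nS_{n-1}+E_{n-1}-1$, playing the role that $T_{n-1}X_{n-1}=\cdots$ played in the BMW proof. After this substitution the terms free of $E_{n-1}$ cancel on both sides, exactly as in the passage from \eqref{EUEU-PhiEu8} to \eqref{EUEU-PhiEu9}, leaving an identity supported on $E_{n-1}$. I then invoke $E_{\mathfrak{u}}X_{n-1}=\mathrm{c}_{n-1}E_{\mathfrak{u}}$ from \eqref{hooklength-idempotentelement1111cde} and the anti-symmetry relation $E_{n-1}(X_{n-1}+X_n)=0$, which gives $E_{\mathfrak{u}}X_nE_{n-1}=-\mathrm{c}_{n-1}E_{\mathfrak{u}}E_{n-1}$, the Nazarov--Wenzl substitute for $E_{\mathcal{U}}X_nE_{n-1}=\mathrm{c}_{n-1}^{-1}E_{\mathcal{U}}E_{n-1}$. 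Comparing the coefficients of $E_{\mathfrak{u}}E_{n-1}X_{n-1}$ and of $E_{\mathfrak{u}}E_{n-1}$ reduces everything to two scalar identities in $u$ and $\mathrm{c}_{n-1}$, which I verify using $c=1-\frac{\omega_0}{2}$. I expect the main obstacle to lie precisely here: the additive skein correction $+E_{n-1}-1$ and the sign produced by the anti-symmetry relation make the cancellations and the final coefficient matching more delicate than in the multiplicative BMW setting, so the bookkeeping in this last step is where care is required, even though each individual manipulation is elementary.
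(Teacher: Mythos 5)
Your proposal follows the paper's proof of Lemma \ref{phi-phi-phi111cde} essentially step for step: the same reduction via $\varphi'_n$ and \eqref{Baxterized-elements111cde}, the same single-strand identity \eqref{EUEU-PhiEu5cde}, and the same final comparison of the coefficients of $E_{\mathfrak{u}}E_{n-1}X_{n-1}$ and $E_{\mathfrak{u}}E_{n-1}$ using the skein relation, $E_{\mathfrak{u}}X_{n}E_{n-1}=-\mathrm{c}_{n-1}E_{\mathfrak{u}}E_{n-1}$ and $c=1-\frac{\omega_{0}}{2}$. Your explicit justification of the inductive step by inserting $E_{\mathfrak{v}}$ and commuting it past the index-$(n-1)$ generators is a correct elaboration of what the paper leaves implicit.
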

\begin{proof}
We shall prove \eqref{F-PhiEu43cde} by induction on $n.$ For $n=1,$ the situation is trivial.

We set
\begin{align}\label{phi-function421cde}
\varphi'_n(\mathrm{c}_1,&\ldots,\mathrm{c}_{n-1},u)\notag\\
&=R_{n-1}(\mathrm{c}_{n-1},u;c)\cdots R_{1}(\mathrm{c}_{1},u;c)\varphi_{1}(u)S_{1}(u, \mathrm{c}_{1})^{-1}\cdots S_{n-1}(u, \mathrm{c}_{n-1})^{-1}.
\end{align}
By \eqref{Baxterized-elements111cde} and \eqref{phi-function421cde}, in order to show \eqref{F-PhiEu43cde}, it suffices to prove that
\begin{align}\label{F-PhiEu4321cde}
E_{\mathfrak{u}}\varphi'_n(\mathrm{c}_1,\ldots,\mathrm{c}_{n-1},u)=E_{\mathfrak{u}}\frac{u+X_{n}+c}{u-X_n}.
\end{align}
By the induction hypothesis, it boils down to proving the following equality:
\begin{align}\label{EUEU-PhiEu5cde}
E_{\mathfrak{u}}R_{n-1}(\mathrm{c}_{n-1},u;c)\frac{u+X_{n-1}+c}{u-X_{n-1}}S_{n-1}(u, \mathrm{c}_{n-1})^{-1}=E_{\mathfrak{u}}\frac{u+X_{n}+c}{u-X_n}.
\end{align}
Since $X_{n}$ commutes with $E_{\mathfrak{u}},$ we can rewrite \eqref{EUEU-PhiEu5cde} as follows:
\begin{align}\label{EUEU-PhiEu6cde}
E_{\mathfrak{u}}(u-X_n)R_{n-1}(&\mathrm{c}_{n-1},u;c)(u+X_{n-1}+c)\notag\\
&=E_{\mathfrak{u}}(u+X_{n}+c)S_{n-1}(u, \mathrm{c}_{n-1})(u-X_{n-1}).
\end{align}
By \eqref{Baxterized-elements11cde} and \eqref{Q-function41cde}, the equality \eqref{EUEU-PhiEu6cde} becomes
\begin{align}\label{EUEU-PhiEu7cde}
E_{\mathfrak{u}}&(u-X_n)\Big(S_{n-1}+\frac{1}{\mathrm{c}_{n-1}+u+c}-\frac{1}{\mathrm{c}_{n-1}+u}E_{n-1}\Big)(u+X_{n-1}+c)\notag\\
&=E_{\mathfrak{u}}(u+X_{n}+c)\Big(S_{n-1}+\frac{1}{\mathrm{c}_{n-1}-u}-\frac{1}{\mathrm{c}_{n-1}-u+\frac{\omega_{0}}{2}-1}E_{n-1}\Big)(u-X_{n-1}).
\end{align}

By definition, we have $S_{n-1}X_{n-1}=X_{n}S_{n-1}+E_{n-1}-1.$ Thus, we get that \eqref{EUEU-PhiEu7cde} is equivalent to
\begin{align}\label{EUEU-PhiEu8cde}
E_{\mathfrak{u}}(u&-X_n)\Big(uS_{n-1}+(X_{n}S_{n-1}+E_{n-1}-1)+cS_{n-1}+1\notag\\
&\hspace{2cm}-\frac{1}{\mathrm{c}_{n-1}+u}E_{n-1}(u+X_{n-1}+c)\Big)\notag\\
&=E_{\mathfrak{u}}(u+X_{n}+c)\Big(uS_{n-1}-(X_{n}S_{n-1}+E_{n-1}-1)-1\notag\\
&\hspace{2cm}-\frac{1}{\mathrm{c}_{n-1}-u+\frac{\omega_{0}}{2}-1}E_{n-1}(u-X_{n-1})\Big).
\end{align}
It is easy to see that the equality \eqref{EUEU-PhiEu8cde} comes down to the following equality:
\begin{align}\label{EUEU-PhiEu9cde}
(c+2u)&E_{\mathfrak{u}}-E_{\mathfrak{u}}(u-X_n)\frac{1}{\mathrm{c}_{n-1}+u}E_{n-1}(u+X_{n-1}+c)\notag\\
&=-E_{\mathfrak{u}}(u+X_{n}+c)\frac{1}{\mathrm{c}_{n-1}-u+\frac{\omega_{0}}{2}-1}E_{n-1}(u-X_{n-1}).
\end{align}

By definition, we have $E_{\mathfrak{u}}X_{n-1}=\mathrm{c}_{n-1}E_{\mathfrak{u}}.$ Hence, we get $E_{\mathfrak{u}}X_{n}E_{n-1}=-\mathrm{c}_{n-1}E_{\mathfrak{u}}E_{n-1}$ by definition.
According to this, by comparing the coefficients of the terms involving $E_{\mathfrak{u}}E_{n-1}X_{n-1}$, we see that it suffices to show that
\begin{align}\label{EUEU-PhiEu11cde}
\frac{-u-\mathrm{c}_{n-1}}{\mathrm{c}_{n-1}+u}=\frac{u-\mathrm{c}_{n-1}+c}{\mathrm{c}_{n-1}-u+\frac{\omega_{0}}{2}-1}.
\end{align}
By comparing the coefficients of the terms involving $E_{\mathfrak{u}}E_{n-1}$, it suffices to show that
\begin{align}\label{EUEU-PhiEu12cde}
(c+2u)+\frac{-(c+u)(\mathrm{c}_{n-1}+u)}{\mathrm{c}_{n-1}+u}=\frac{u(-u+\mathrm{c}_{n-1}-c)}{\mathrm{c}_{n-1}-u+\frac{\omega_{0}}{2}-1}.
\end{align}
Noting that $c=1-\frac{\omega_{0}}{2},$ it is easy to verify that \eqref{EUEU-PhiEu11cde} and \eqref{EUEU-PhiEu12cde} are true. Thus, \eqref{EUEU-PhiEu9cde} holds. The lemma is proved.
\end{proof}

Let $\overline{\varphi}_{1}(u) :=(u-v_{1})\cdots (u-v_{d})\frac{u+X_{1}+c}{u-X_1}.$ For $k=2,\ldots,n$, we set
\begin{align}\label{phi-function424242cde}
\overline{\varphi}_k(u_1,\ldots,u_{k-1},u)& :=R_{k-1}(u_{k-1},u;c)\overline{\varphi}_{k-1}(u_1,\ldots,u_{k-2},u)S_{k-1}(u_{k-1}, u)\notag\\
=R_{k-1}&(u_{k-1},u;c)\cdots R_{1}(u_{1},u;c)\overline{\varphi}_{1}(u)S_{1}(u_{1}, u)\cdots S_{k-1}(u_{k-1}, u).
\end{align}

We also define the following rational function:
\begin{align}\label{Phi-function111cde}
\Psi(u_1,\ldots,u_n) :=\overline{\varphi}_1(u_1)\cdots \overline{\varphi}_{n-1}(u_1,\ldots,u_{n-1})\overline{\varphi}_n(u_1,\ldots,u_{n}).
\end{align}
Recall that the integers $p_{1},\ldots,p_{n}$ associated to $\mathfrak{t}$ have been defined as in \eqref{integer-indexbar} or \eqref{integer-indexbar11}.

Now we can state the main result of this paper.

\begin{theorem}\label{main-theorem11112cde}
The idempotent $E_{\mathfrak{t}}$ of $\mathscr{W}_{d, n}$ corresponding to an $n$-updown $\bm{\lambda}$-tableau $\mathfrak{t}$ can be derived by the following consecutive evaluations$:$
\begin{equation}\label{idempotents111cde}
E_{\mathfrak{t}}=\frac{1}{g(\mathfrak{t})}\Big(\prod_{k=1}^{n}\frac{(u_{k}-\mathrm{c}_{k})^{p_{k}}}{u_{k}+\mathrm{c}_{k}+c}\Big)
\Psi(u_1,\ldots,u_n)\Big|_{u_{1}=\emph{c}_1}\cdots\Big|_{u_{n}=\emph{c}_{n}}.
\end{equation}
\end{theorem}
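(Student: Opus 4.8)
The plan is to prove \eqref{idempotents111cde} by induction on $n$, transcribing the argument for Theorem \ref{main-theorem11112} with the multiplicative Baxterised data replaced by its additive Nazarov--Wenzl counterpart. For $n=1$ I would first note that $p_1=0$ by Proposition \ref{special-propocde}, so that \eqref{hooklength-indexbar11cde} yields $g(\mathfrak{t})=\prod_{1\leq k\leq d,\,v_k\neq\mathrm{c}_1}(\mathrm{c}_1-v_k)$. Writing $\overline{\varphi}_1(u_1)=(u_1-v_1)\cdots(u_1-v_d)\varphi_1(u_1)$ and inserting the factor $(u_1-\mathrm{c}_1)$, the cyclotomic product cancels $g(\mathfrak{t})$ and leaves $E_{\mathfrak{u}}\,\frac{u_1-\mathrm{c}_1}{u_1-X_1}\cdot\frac{u_1+X_1+c}{u_1+\mathrm{c}_1+c}\big|_{u_1=\mathrm{c}_1}$ with $E_{\mathfrak{u}}=1$; this equals $E_{\mathfrak{t}}$ by \eqref{sum-function1111cde} together with the eigenvalue relation $X_1E_{\mathfrak{t}}=\mathrm{c}_1E_{\mathfrak{t}}$ of \eqref{hooklength-idempotentelement1111cde}.

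For $n\geq 2$ I would exploit the factorisation $\Psi(u_1,\ldots,u_n)=\Psi(u_1,\ldots,u_{n-1})\,\overline{\varphi}_n(u_1,\ldots,u_n)$ coming from \eqref{Phi-function111cde} and \eqref{phi-function424242cde}, in which the first factor is the rational function attached to $\mathfrak{u}$. Performing the evaluations $u_1=\mathrm{c}_1,\ldots,u_{n-1}=\mathrm{c}_{n-1}$ and applying the induction hypothesis to $\mathfrak{u}$ reduces the right-hand side of \eqref{idempotents111cde} to
\[
\frac{g(\mathfrak{u})}{g(\mathfrak{t})}\,\frac{(u_n-\mathrm{c}_n)^{p_n}}{u_n+\mathrm{c}_n+c}\,E_{\mathfrak{u}}\,\overline{\varphi}_n(\mathrm{c}_1,\ldots,\mathrm{c}_{n-1},u_n)\Big|_{u_n=\mathrm{c}_n}.
\]
Since the cyclotomic prefactor is central and factors through the recursion \eqref{phi-function424242cde}, one has $\overline{\varphi}_n(\mathrm{c}_1,\ldots,\mathrm{c}_{n-1},u_n)=(u_n-v_1)\cdots(u_n-v_d)\,\varphi_n(\mathrm{c}_1,\ldots,\mathrm{c}_{n-1},u_n)$, and Lemma \ref{phi-phi-phi111cde} then lets me replace $E_{\mathfrak{u}}\varphi_n(\mathrm{c}_1,\ldots,\mathrm{c}_{n-1},u_n)$ by $\prod_{r=1}^{n-1}g(u_n,\mathrm{c}_r)\,E_{\mathfrak{u}}\frac{u_n+X_n+c}{u_n-X_n}$.

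The hard part, and essentially the only computation, is to verify that the scalar
\[
\frac{g(\mathfrak{u})}{g(\mathfrak{t})}\,(u_n-v_1)\cdots(u_n-v_d)\prod_{r=1}^{n-1}g(u_n,\mathrm{c}_r)\,(u_n-\mathrm{c}_n)^{p_n-1}
\]
is regular at $u_n=\mathrm{c}_n$ and equals $1$. Substituting $g(u_n,\mathrm{c}_r)=\frac{(u_n-\mathrm{c}_r+1)(u_n-\mathrm{c}_r-1)}{(u_n-\mathrm{c}_r)^2}$ from \eqref{Baxterized-elements1111cde} and the explicit shape of $g(\mathfrak{t})/g(\mathfrak{u})=\psi(\mathfrak{u},\mathfrak{t})$ from \eqref{hooklength-indexbar11cde}, one must match the order of vanishing and the leading coefficient at $u_n=\mathrm{c}_n$ diagonal by diagonal and component by component; this is the additive transcription (via $q^{2k}\mapsto k$, $v_sq^{2k}\mapsto v_s+k$ and $v_s^{-1}q^{-2k}\mapsto -v_s-k$) of the telescoping in [IM], [IMOg1] and of the computation in Theorem \ref{main-theorem11112}. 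The integer $p_n$ of \eqref{integer-indexbar}--\eqref{integer-indexbar11} is designed precisely so that $(u_n-\mathrm{c}_n)^{p_n-1}$ absorbs the residual zero or pole at $u_n=\mathrm{c}_n$, while the genericity of $\mathscr{W}_{d,n}$ prevents accidental coincidences among the $\mathrm{c}_r$ from disturbing the count; as with $g(\mathfrak{t})$ itself, the identity is in fact forced by the very definition of $\psi(\mathfrak{u},\mathfrak{t})$. Granting it, the expression collapses to $E_{\mathfrak{u}}\frac{u_n-\mathrm{c}_n}{u_n-X_n}\cdot\frac{u_n+X_n+c}{u_n+\mathrm{c}_n+c}\big|_{u_n=\mathrm{c}_n}$, which equals $E_{\mathfrak{t}}\frac{u_n+X_n+c}{u_n+\mathrm{c}_n+c}\big|_{u_n=\mathrm{c}_n}$ by \eqref{sum-function1111cde}, and finally $E_{\mathfrak{t}}$ by the eigenvalue relation \eqref{hooklength-idempotentelement1111cde}, completing the induction.
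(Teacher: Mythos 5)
Your proposal follows the paper's own proof essentially step for step: the same induction on $n$, the same base case using $p_1=0$ and the explicit form of $g(\mathfrak{t})$, the same factorisation of $\Psi$ combined with Lemma \ref{phi-phi-phi111cde}, and the same key claim that the scalar $\frac{g(\mathfrak{u})}{g(\mathfrak{t})}(u_n-v_1)\cdots(u_n-v_d)\prod_{r=1}^{n-1}g(u_n,\mathrm{c}_r)(u_n-\mathrm{c}_n)^{p_n-1}$ is regular and equal to $1$ at $u_n=\mathrm{c}_n$, which the paper likewise asserts directly from \eqref{hooklength-indexbar11cde} rather than verifying diagonal by diagonal. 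The argument is correct and matches the paper's route.
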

\begin{proof}
We shall prove the theorem by induction on $n.$ For $n=1,$ we have $p_{1}=0$ by Proposition \ref{special-propocde}. Thus, we get that the right-hand side of \eqref{idempotents111cde} is equal to
\begin{align}\label{n-1-istruecde}
\frac{1}{g(\mathfrak{t})}&\frac{(u_{1}-v_{1})\cdots (u_{1}-v_{d})}{u_{1}+\mathrm{c}_{1}+c}
\frac{u_{1}+X_{1}+c}{u_{1}-X_1}\Big|_{u_{1}=\mathrm{c}_1}\notag\\
&=\frac{1}{g(\mathfrak{t})}\frac{(u_{1}-v_{1})\cdots (u_{1}-v_{d})}{u_{1}-\mathrm{c}_{1}}\frac{u_{1}-\mathrm{c}_{1}}{u_{1}+\mathrm{c}_{1}+c}\frac{u_{1}+X_{1}+c}{u_{1}-X_1}\Big|_{u_{1}=\mathrm{c}_1}.
\end{align}
Moreover, by \eqref{hooklength-indexbar11cde}, we have \[g(\mathfrak{t})=\prod_{1\leq k\leq d;v_{k}\neq \mathrm{c}_{1}}(\mathrm{c}_{1}-v_{k}).\]
Therefore, it is easy to see that \eqref{n-1-istruecde} is equal to $E_{\mathfrak{t}}$ by \eqref{hooklength-idempotentelement1111cde} and \eqref{sum-function1111cde}.

For $n\geq 2,$ by the induction hypothesis we can write the right-hand side of \eqref{idempotents111cde} as follows:
\begin{align}\label{n-1-istrue2cde}
\frac{g(\mathfrak{u})}{g(\mathfrak{t})}\frac{(u_{n}-\mathrm{c}_{n})^{p_{n}}}{u_{n}+\mathrm{c}_{n}+c}E_{\mathfrak{u}}
\overline{\varphi}_n(\mathrm{c}_{1},\ldots,\mathrm{c}_{n-1},u_n)\Big|_{u_{n}=\mathrm{c}_{n}}.
\end{align}
Note that $\overline{\varphi}_n(\mathrm{c}_{1},\ldots,\mathrm{c}_{n-1},u_n)=(u_{n}-v_{1})\cdots (u_{n}-v_{d})\varphi_n(\mathrm{c}_{1},\ldots,\mathrm{c}_{n-1},u_n).$ By \eqref{F-PhiEu43cde}, we can rewrite the expression \eqref{n-1-istrue2cde} as
\begin{align}\label{n-1-istrue3cde}
\frac{g(\mathfrak{u})}{g(\mathfrak{t})}\frac{(u_{n}-\mathrm{c}_{n})^{p_{n}}}{u_{n}+\mathrm{c}_{n}+c}(u_{n}-v_{1})\cdots (u_{n}-v_{d})\prod_{r=1}^{n-1}g(u_{n}, \mathrm{c}_{r})E_{\mathfrak{u}}\frac{u_{n}+X_{n}+c}{u_{n}-X_n}\Big|_{u_{n}=\mathrm{c}_{n}}.
\end{align}

By \eqref{hooklength-indexbar11cde}, we see that
\begin{align*}
\frac{g(\mathfrak{u})}{g(\mathfrak{t})}(u_{n}&-v_{1})\cdots (u_{n}-v_{d})\prod_{r=1}^{n-1}g(u_{n}, \mathrm{c}_{r})(u_{n}-\mathrm{c}_{n})^{p_{n}-1}\notag\\
&=\frac{g(\mathfrak{u})}{g(\mathfrak{t})}(u_{n}-v_{1})\cdots (u_{n}-v_{d})\prod_{r=1}^{n-1}\frac{(u_{n}-\mathrm{c}_{r}+1)(u_{n}-\mathrm{c}_{r}-1)}{(u_{n}-\mathrm{c}_{r})^{2}}(u_{n}-\mathrm{c}_{n})^{p_{n}-1}
\end{align*}
is regular at $u_n=\mathrm{c}_{n}$ and is equal to $1.$ Thus, the expression \eqref{n-1-istrue3cde} equals
\begin{align}\label{n-1-istrue4cde}
E_{\mathfrak{u}}\frac{u_{n}-\mathrm{c}_{n}}{u_{n}-X_n}\frac{u_{n}+X_{n}+c}{u_{n}+\mathrm{c}_{n}+c}\Big|_{u_{n}=\mathrm{c}_{n}}.
\end{align}
By \eqref{sum-function1111cde}, we see that \eqref{n-1-istrue4cde} is equal to
\begin{align}\label{n-1-istrue5cde}
E_{\mathfrak{t}}\frac{u_{n}+X_{n}+c}{u_{n}+\mathrm{c}_{n}+c}\Big|_{u_{n}=\mathrm{c}_{n}}.
\end{align}
By \eqref{hooklength-idempotentelement1111cde}, we have $E_{\mathfrak{t}}X_{n}=\mathrm{c}_{n}E_{\mathfrak{t}}.$ Thus, we get that the expression \eqref{n-1-istrue5cde}, that is, the right-hand side of \eqref{idempotents111cde} equals $E_{\mathfrak{t}}.$
 \end{proof}

\begin{remark}\label{remark111cde}
Let $\mathscr{D}_{d, n}$ be the degenerate cyclotomic Hecke algebra. It has been proved in [AMR, Proposition 7.2] that $\mathscr{D}_{d, n}$ is isomorphic to the quotient of $\mathscr{W}_{d, n}$ by the two-sided ideal generated by all $E_{i}.$ In the process of taking quotient, the parameter $\omega_{0}$ disappears; however, the parameter $c$ is reserved and can be arbitrary. If we replace the $S_{i}(u, v),$ $R_{i}(u,v;c),$ $\varphi_{1}(u)$ in \eqref{phi-function42cde} with
\begin{align*}
\overline{S}_{i}(u,v)=S_{i}+\frac{1}{v-u},\quad \overline{R}_{i}(u,v;c) :=S_{i}+\frac{1}{u+v+c},\quad \chi_{1}(u) :=\frac{u+X_{1}+c}{u-X_1},
\end{align*}
it is easy to see that the analogue of Lemma \ref{phi-phi-phi111cde} holds.

Let $\overline{\chi}_{1}(u) :=(u-v_{1})\cdots (u-v_{d})\frac{u+X_{1}+c}{u-X_1},$ and for $k=2,\ldots,n$, set
\begin{align*}
\overline{\chi}_k(u_1,\ldots,u_{k-1},u)& :=\overline{R}_{k-1}(u_{k-1},u;c)\overline{\chi}_{k-1}(u_1,\ldots,u_{k-2},u)\overline{S}_{k-1}(u_{k-1}, u).
\end{align*}

We also define a rational function by
\begin{align*}
\Omega(u_1,\ldots,u_n) :=\overline{\chi}_1(u_1)\cdots \overline{\chi}_{n-1}(u_1,\ldots,u_{n-1})\overline{\chi}_n(u_1,\ldots,u_{n}).
\end{align*}
Then it is easy to see that the analogue of Theorem \ref{main-theorem11112cde} is true. Thus, we get a one-parameter family of the fusion procedures for degenerate cyclotomic Hecke algebras, generalizing the results obtained in [ZL].
\end{remark}

\noindent{\bf Acknowledgements.}
The author is deeply indebted to Dr. Shoumin Liu for posing the question about fusion procedures for cyclotomic Nazarov-Wenzl algebras to him.



\end{document}